\newtheorem{theorem}{Theorem}[section]
\newtheorem{assum}{Assumption}[section]
\newtheorem{defn}{Definition}[section]
\newtheorem{coro}{Corollary}[section]
\newtheorem{lemma}{Lemma}[section]
\newtheorem{remark}{Remark}[section]
\begin{document}

\title{Distributed Sweep Coverage Algorithm of Multi-agent Systems Using Workload Memory}
\author{Chao Zhai \thanks{Chao Zhai is with Future Resilient Systems, Singapore-ETH Centre, ETH Zurich, 1 Create Way, CREATE Tower, Singapore 138602. He is also with Institute of Catastrophe Risk Management, Nanyang Technological University, 50 Nanyang Avenue, Singapore 639798. Email: zhaichao@amss.ac.cn}}

\maketitle

\begin{abstract}
This paper addresses the sweep coverage problem of multi-agent systems in uncertain regions. A new formulation of distributed sweep coverage is proposed to cooperatively complete the workload in the uncertain region. Specifically, each agent takes part in partitioning the whole region while sweeping its own subregion. In addition, the partition operation is carried out to balance the workload in subregions. The trajectories of partition points of agents form the boundaries between adjacent sub-regions. Moreover, it is proved that multi-agent system with the proposed control algorithm is input-to-state stable. Theoretical analysis is conducted to obtain the upper bound of the error between the actual sweep time and the optimal sweep time. Finally, numerical simulations demonstrate the effectiveness of the proposed approach.
\end{abstract}

Keywords: sweep coverage, multi-agent system, distributed control, workload memory

\section{Introduction}
The great progress in communication technologies makes it easy and costless to share mutual information and coordinate the joint actions among multiple agents. Thus, cooperative control of multi-agent systems has attracted much interest of researchers in various fields in the past decade. The coordination of multiple agents contributes to improving the efficiency and robustness of carrying out the complicated tasks, such as leader tracking \cite{hong06}, flocking behavior \cite{zhang11}, boundary patrolling \cite{albert12,chen11}, persistent monitoring \cite{hussein07,song14}, region coverage \cite{zhai12,zhai13} and even missile interception \cite{zhai16}, to name just a few.

As a type of coordination tasks, cooperative coverage of multi-agent systems refers to the path planning of robot team to visit every point in the environment or the optimal deployment of sensor networks according to the certain performance index. The approach of divide-and-conquer is widely applied in the region coverage of multi-agent systems \cite{chos01,cort04,luna10}. Specifically, \cite{cort04} presents a gradient descent algorithm to optimize a class of utility functions in the coverage region, where the centroidal Voronoi partition is adopted to allocate a subregion for each mobile sensor. \cite{luna10} extends the above work by proposing a distributed, adaptive coverage algorithm for nonholonomic mobile sensors. In multi-robot coordination, the coverage problem falls into three categories: blanket coverage, barrier coverage and sweep coverage\cite{gage92}. Blanket coverage aims
at deploying multiple agents in the given coverage region to maximize the probability of identifying the target \cite{ghosh08}. Barrier coverage is used to protect the target in the given region and meanwhile maximize the detection rate of invaders overpassing the barrier that is formed by the agents \cite{kumar07}. Actually, sweep coverage can be regarded as a moving barrier, and it focuses on sweeping or monitoring the given region by arriving at every point. For example, \cite{cheng09} investigates the sweep coverage of mobile sensors in a corridor environment using the idea of nearest neighbor rules. In addition, \cite{zhai13} develops a decentralized control algorithm for the sweep coverage in uncertain region. To be specific, the whole region is divided into a series of strips, and the agents cooperatively sweep the current stripe while partitioning the adjacent next strip. In theory, the authors provide the upper bound of the error between the optimal sweep time and the actual sweep time.

In this paper, we address the distributed sweep coverage problem of multi-agent systems in uncertain environment. Instead of separating the whole region into several stripes, the agents directly partition the whole region using the trajectories of virtual partition bars while sweeping their respective subregion. We will estimate the error between the optimal sweep time and the actual time using the proposed sweep strategy. The main contributions of this work are listed as follows.
\begin{enumerate}
  \item We develop a new sweep coverage formulation for multi-agent systems using workload memory on the swept region, which overcomes the error accumulation and thus provides a more precise estimation of the error between the actual coverage time and the optimal time.
  \item The sweep coverage algorithm is implemented in the distributed manner, which ensures the robustness of multi-agent systems. Moreover, we provide the sufficient condition for collision avoidance of subregion boundaries in theory.
  \item Compared with previous work \cite{zhai13,zhai12}, the agents in this work need not synchronize their joint action of moving towards the next strip after completing the workload on the current strip, which greatly reduces communication costs among the agents.
\end{enumerate}

The outline of this paper is given as follows. Section \ref{sec:prob} formulates the problem of distributed sweep coverage and develops the control algorithm of multi-agent systems. Section \ref{sec:result} presents theoretical results on the proposed control algorithm, followed by numerical simulations in Section \ref{sec:simu}. Finally, we draw a conclusion and discuss the future direction in Section \ref{sec:con}.

\section{Problem Formulation}\label{sec:prob}
\begin{figure}
\scalebox{0.07}[0.07]{\includegraphics{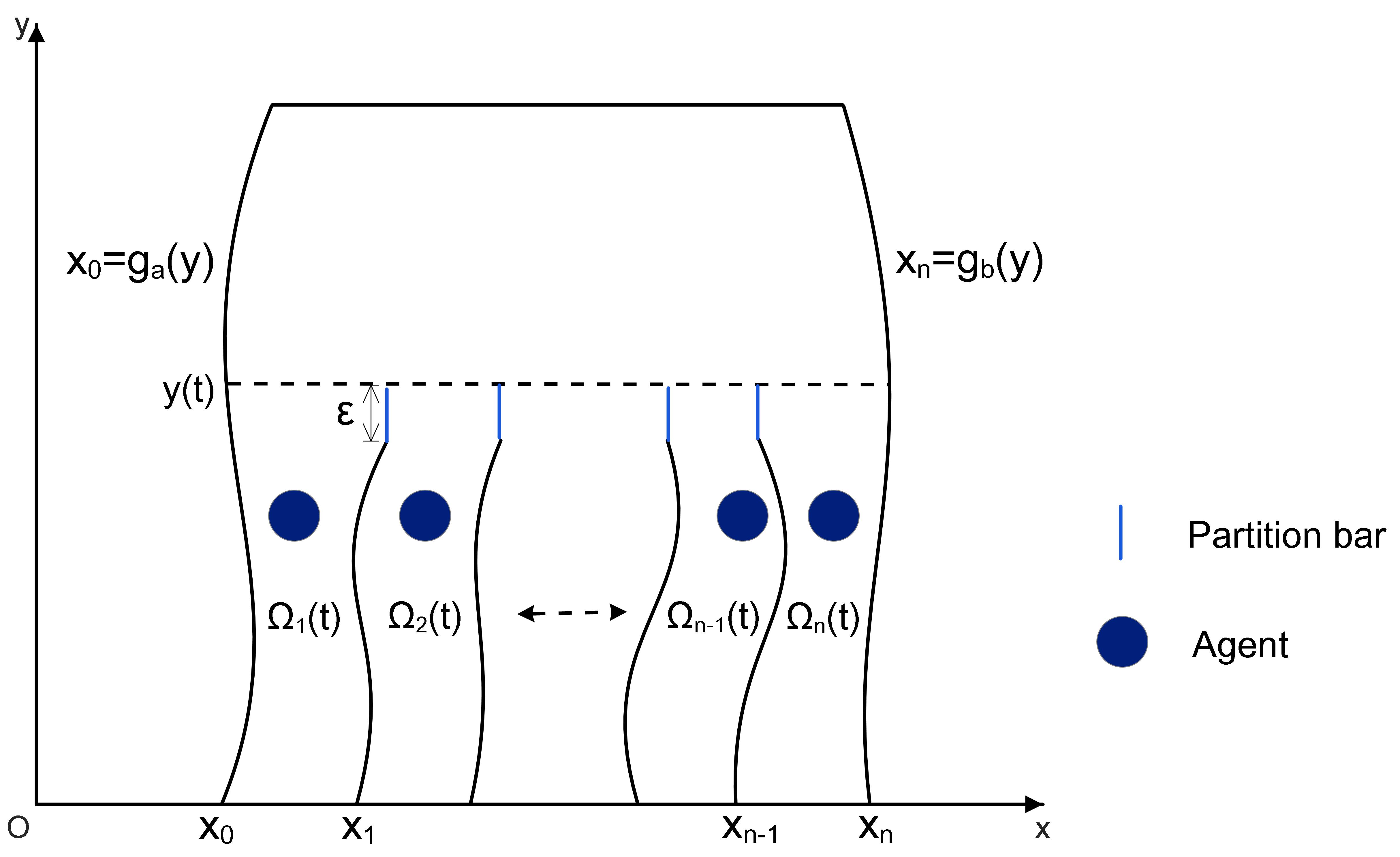}}\centering
\caption{\label{ireg} Sweep coverage of the irregular region with smooth boundaries. The left and right boundaries of coverage region are described by $x_0=g_a(y)$ and $x_{n}=g_b(y)$, respectively.}
\end{figure}
Consider an irregular region $\Omega$ enclosed by two parallel boundaries with the distance $l$ and two smooth boundaries described by $x_0=g_a(y)$ and $x_n=g_b(y)$, respectively (see Fig. \ref{ireg}). There are $n$ mobile agents responsible for sweeping the workload in the region $\Omega$, and the distribution density function of workload is given by $\rho(x,y)$. Each agent is equipped with a virtual partition bar with the length $\epsilon$ to allocate the workload in the whole region, and the trajectories of partition bars form the boundaries between adjacent subregions. Eventually, the partition bars of mobile agents will divide $\Omega$ into $n$ subregions denoted by $\Omega_i,i\in \mathcal{I}_n=\{1,2,...,n\}$. Let $(x_i,y_i)^T\in R^2$, $i\in \mathcal{I}_n$ refer to the coordinate position for the lower terminal point of partition bar for agent $i$,
then the dynamical model of partition bar for agent $i$ is given by
\begin{equation}\label{sys}
\left\{
  \begin{array}{ll}
   \dot{x}_i=u_i(t) \\
    \dot{y}_i=v , & \hbox{$i\in \mathcal{I}_n$.}
  \end{array}
\right.
\end{equation}
where $u_i(t)$ denotes the time-varying control input for agent $i$, and $v$ refers to the constant speed of agents along the $y$-axis. Thus, the complete time of region partition is computed by $T_p=(l-\epsilon)/v$. Each agent can only communicate with its nearest neighbors to share the workload information in their respective subregions. Moreover, the agents are required to evenly partition the workload in the region $\Omega$, and meanwhile they need to complete the workload in their respective subregions with the constant sweeping rate $\sigma$. Each mobile agent can moves back and forth using the boustrophedon ($i.e.$, push-pull ploughing) approach to sweep its own subregion \cite{chos01}. Let $\rho(x,y)$ denote the distribution density function of workload in the region $\Omega$. For $\rho(x,y)$, we have the following assumption \cite{zhai13}
\begin{assum}
There exist positive constants $\bar{\rho}$ and $\underline{\rho}$ such that
$$
\underline{\rho} \leq \rho(x,y)\leq \bar{\rho}, \quad \forall (x,y) \in \Omega
$$
\end{assum}
Let $T^*$ and $T$ represent the optimal sweep time and the actual sweep time using the proposed sweep scheme, respectively.
Thus, $\Delta T=T-T^*$ refers to the corresponding time error. Our goal is to design the control inputs $u_i(t),i\in \mathcal{I}_n$ so that multi-agent systems are able to complete the workload in the region $\Omega$ as soon as possible and estimate the upper bound of time error $\Delta T$. Specifically, the control input $u_i(t)$ is designed as
\begin{equation}\label{control}
u_i(t)=\kappa\left(m_{i+1}(t)-m_{i}(t)\right), \quad i\in \mathcal{I}_{n-1}
\end{equation}
where
$$
m_i(t)=\iint_{\Omega_i(t)}\rho(x,y)dxdy=\int_{0}^{vt+\epsilon}\int_{x_{i-1}}^{x_i}\rho(x,y)dxdy
$$
represents the workload in Subregion $\Omega_i(t)$, and $\kappa$ refers to the positive constant. For agent $n$, we have $\dot{x}_n=g^{'}_b(y)\dot{y}_n=g^{'}_b(vt)v$, since its partition bar moves along the boundary $g_b(y)$ with the vertical speed $v$. For agent $i\in \mathcal{I}_n$, the distributed sweep coverage algorithm (DSCA) is presented in Table~\ref{dsca}. First of all, it is necessary to set the parameters $v$, $\sigma$ and $\epsilon$ for sweep coverage and compute the complete time of region partition $T_p$. Then each agent starts to sweep its own subregion and simultaneously cooperates with other agents for the region partition. After the region partition is completed at $t=T_p$, each agent only focuses on sweeping the workload in its own subregion. Finally, the sweeping task is finished after all the agents clean up the workload in their respective subregions.
\begin{table}
 \caption{\label{dsca}Distributed Sweep Coverage Algorithm.}
 \begin{center}
 \begin{tabular}{lcl} \hline
  1: Set $v$, $\sigma$ and $\epsilon$, and compute $T_p$ \\
  2: \textbf{while}~($e_i(t)>0$) \\
  4: ~~~~~~~\textbf{if}~($t\leq T_p$)  \\
  5: ~~~~~~~~~~Compute the workload $m_i(t)$ in $\Omega_i(t)$ \\
  6: ~~~~~~~~~~Obtain the workload $m_{i+1}(t)$ in $\Omega_{i+1}(t)$ \\
  7: ~~~~~~~~~~Partition the region $\Omega$ with (\ref{sys}) and (\ref{control}) \\
  8: ~~~~~~~\textbf{end if}  \\
  9: ~~~~~~~Sweep the workload in $\Omega_i(t)$ with the rate $\sigma$ \\
  3: ~~~~~~~Compute residual workload $e_i(t)=m_i(t)-\sigma t$ \\
 10: \textbf{end while} \\ \hline
 \end{tabular}
 \end{center}
\end{table}

To facilitate theoretical analysis in Section \ref{sec:result}, it is necessary to provide the concept of input-to-state stability (ISS) of dynamical system \cite{khal}. Thus, we introduce the class $\mathcal{K}$ and $\mathcal{KL}$ functions and then present the definition of ISS as follows.
\begin{defn}
A continuous function $\alpha: [0, a)\rightarrow[0,\infty)$ is said to belong to class $\mathcal{K}$
if it is strictly increasing and $\alpha(0)=0$.
\end{defn}

\begin{defn}
A continuous function $\beta:[0, a)\times[0,\infty)\rightarrow[0,\infty)$ is said to belong
to class $\mathcal{KL}$ if for each fixed $s$, the mapping $\beta(r, s)$ belongs to class $K$ with respect
to $r$ and, for each fixed $r$, the mapping $\beta(r, s)$ is decreasing with respect to $s$ and
$\beta(r,s)\rightarrow0$ as $s\rightarrow\infty$.
\end{defn}

\begin{defn}\label{def_iss}
The system $\dot{z}=f(z,t,u)$ is said to be input-to-state stable if there exist a class $\mathcal{KL}$ function $\beta$ and a class $\mathcal{K}$ function $\alpha$ such that for any initial state $z(t_0)$ and any bounded input $u(t)$, the solution $z(t)$ exists for all $t\geq t_0$ and satisfies
\begin{equation}\label{iss}
\|z(t)\|\leq \beta(\|z(t_0)\|,t-t_0)+\alpha\left(\sup_{t_0\leq\tau\leq t}\|u(\tau)\|\right)
\end{equation}
\end{defn}

\section{Main Results}\label{sec:result}

To quantify the partition performance of multi-agent system, we introduce the following energy function
\begin{equation}\label{energy}
    H(t)=\sum_{i=1}^{n}\left(m_i(t)-\bar{m}(t)\right)^2
\end{equation}
where
$$
\bar{m}(t)=\frac{1}{n}\iint_{\Omega(t)}\rho(x,y)dxdy=\frac{1}{n}\int_{0}^{vt+\epsilon}\int_{x_{0}}^{x_{n}}\rho(x,y)dxdy
$$
Next, we present four lemmas in order to prove the ISS of multi-agent system (\ref{sys}).
\begin{lemma}\label{ineq}
$$
\frac{\lambda_{min}}{n}H(t)\leq\sum_{i=1}^{n-1}\left(m_{i+1}(t)-m_{i}(t)\right)^2\leq\lambda_{max}H(t),
$$
where $\lambda_{min}$ and $\lambda_{max}$ are respectively the minimum and maximum eigenvalues of a positive definite matrix $\Gamma$ that depends on the communication topology of multi-agent systems.
\end{lemma}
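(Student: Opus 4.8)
The plan is to vectorise everything and reduce the statement to a Rayleigh-quotient bound on a single subspace. Put $w_i(t)=m_i(t)-\bar m(t)$ and $w=(w_1,\dots,w_n)^{T}$. Since by definition of $\bar m$ we have $\sum_{i=1}^{n}m_i(t)=n\bar m(t)$, it follows that $\mathbf 1^{T}w=0$, i.e. $w$ lies in the hyperplane $\mathbf 1^{\perp}$ for every $t$, and moreover $H(t)=\|w\|^{2}$. On the other hand $m_{i+1}-m_i=w_{i+1}-w_i$, so $\sum_{i=1}^{n-1}\bigl(m_{i+1}(t)-m_i(t)\bigr)^{2}=\|Nw\|^{2}=w^{T}\Gamma w$, where $N\in\mathbb R^{(n-1)\times n}$ is the difference (incidence) matrix of the nearest‑neighbour interaction graph and $\Gamma=N^{T}N$ is the associated graph Laplacian: a symmetric, tridiagonal, positive semidefinite matrix whose kernel is exactly $\mathrm{span}(\mathbf 1)$. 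Thus the lemma is nothing but the statement that the two quadratic forms $w^{T}\Gamma w$ and $\|w\|^{2}$ are equivalent on $\mathbf 1^{\perp}$.

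Next I would invoke the Courant–Fischer (Rayleigh quotient) bounds. Because $\Gamma$ is symmetric PSD with one‑dimensional kernel $\mathrm{span}(\mathbf 1)$, it is positive definite on the invariant complement $\mathbf 1^{\perp}$; let $\lambda_{\min}>0$ and $\lambda_{\max}$ denote the smallest positive and the largest eigenvalue. Then $\lambda_{\min}\|w\|^{2}\le w^{T}\Gamma w\le\lambda_{\max}\|w\|^{2}$ for all $w\in\mathbf 1^{\perp}$. Substituting $w=w(t)$, which stays in $\mathbf 1^{\perp}$ for all $t$, and using $\|w\|^{2}=H(t)$ and $w^{T}\Gamma w=\sum_{i=1}^{n-1}(m_{i+1}-m_i)^{2}$ yields the right inequality immediately and the left one with constant $\lambda_{\min}$; the weaker constant $\lambda_{\min}/n$ stated in the lemma then follows a fortiori (the extra factor can also be produced by the elementary chain $H=\|w\|^{2}\le n\|w\|_{\infty}^{2}$ together with a telescoping/Cauchy–Schwarz estimate of $\|w\|_{\infty}$ by the consecutive differences $w_{i+1}-w_i$). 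Finally I would note that $\Gamma$, and hence $\lambda_{\min},\lambda_{\max}$, depend only on the communication topology — here the path graph $P_n$ — so that, if desired, one can make them explicit, e.g. $\lambda_{\min}=4\sin^{2}\!\frac{\pi}{2n}$ and $\lambda_{\max}=4\cos^{2}\!\frac{\pi}{2n}$.

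The main obstacle is the left-hand inequality, which is precisely where connectivity of the interaction graph is used: one has to exclude the degenerate scenario in which all consecutive differences $m_{i+1}-m_i$ are small while the workloads $m_i$ are far from their mean. This is impossible exactly because the nearest-neighbour graph is connected, equivalently because $\ker\Gamma=\mathrm{span}(\mathbf 1)$ and $\Gamma$ is therefore positive definite on $\mathbf 1^{\perp}$; establishing that positive-definiteness (i.e. $\lambda_{\min}>0$) and extracting a clean constant is the crux. The upper bound and the vectorisation step are routine, the only point requiring care being the observation that $w(t)\in\mathbf 1^{\perp}$ for all $t$ — immediate from the definition of $\bar m(t)$ — so that the spectral estimates apply uniformly in time.
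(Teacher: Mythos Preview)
Your proposal is correct and follows the standard Laplacian/Rayleigh-quotient route: writing $w=(m_i-\bar m)_i\in\mathbf 1^{\perp}$, recognising $\sum(m_{i+1}-m_i)^2=w^{T}\Gamma w$ with $\Gamma$ the path-graph Laplacian, and invoking the spectral bounds on $\mathbf 1^{\perp}$. The paper itself does not prove this lemma at all; it simply refers the reader to Lemma~3.2 of \cite{zhai12}, so your argument supplies precisely the details the paper omits, and the approach you take is the expected one for results of this type.
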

\begin{proof}
The proof follows by the same argument as that in Lemma 3.2 in \cite{zhai12}, and is thus omitted.
\end{proof}

\begin{lemma}\label{ineq2}
$$
\left(\sum_{i=1}^{n}\left(m_i(t)-\bar{m}(t)\right)^2\right)^{\frac{1}{2}}\leq \sqrt{n(n-1)}\bar{m}(t)
$$
\end{lemma}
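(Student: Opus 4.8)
The plan is to reduce the claimed bound to the elementary fact that for nonnegative reals the sum of squares is dominated by the square of the sum. First I would record two facts: each workload $m_i(t)=\iint_{\Omega_i(t)}\rho(x,y)\,dx\,dy$ is nonnegative (in fact strictly positive, by Assumption~1, since $\rho\geq\underline{\rho}>0$ and each subregion has positive area), and $\bar{m}(t)$ is by definition the arithmetic mean of the $m_i(t)$, so that $\sum_{i=1}^n m_i(t)=n\bar{m}(t)$.

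Next I would expand the left-hand side in the familiar variance form: $\sum_{i=1}^n\big(m_i(t)-\bar{m}(t)\big)^2=\sum_{i=1}^n m_i(t)^2-2\bar{m}(t)\sum_{i=1}^n m_i(t)+n\bar{m}(t)^2$, and substituting $\sum_i m_i(t)=n\bar{m}(t)$ collapses this to $\sum_{i=1}^n m_i(t)^2-n\bar{m}(t)^2$.

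The key step is then the inequality $\sum_{i=1}^n m_i(t)^2\leq\big(\sum_{i=1}^n m_i(t)\big)^2=n^2\bar{m}(t)^2$, which holds precisely because all the cross terms $2\sum_{i<j}m_i(t)m_j(t)$ are nonnegative --- this is where positivity of the workloads is used. Combining the two displays gives $\sum_{i=1}^n\big(m_i(t)-\bar{m}(t)\big)^2\leq n^2\bar{m}(t)^2-n\bar{m}(t)^2=n(n-1)\bar{m}(t)^2$, and taking square roots of both (nonnegative) sides yields the stated bound. There is no genuine obstacle in this argument; the only points requiring a moment's care are invoking nonnegativity of the $m_i(t)$ at exactly the right place, and noting $\bar{m}(t)\geq0$ so that the final square-root step is legitimate.
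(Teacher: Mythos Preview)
Your proof is correct and follows exactly the same route as the paper: expand the sum of squares into the variance form $\sum_i m_i^2-n\bar m^2$, bound $\sum_i m_i^2\le(\sum_i m_i)^2=n^2\bar m^2$ using nonnegativity of the $m_i$, and take square roots. If anything, you are slightly more explicit than the paper in justifying the key inequality and the legitimacy of the final square-root step.
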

\begin{proof}
It follows from
\begin{equation*}
\begin{split}
\sum_{i=1}^{n}\left(m_i(t)-\bar{m}(t)\right)^2&=\sum_{i=1}^{n}m_i(t)^2+n\bar{m}(t)^2-2\bar{m}(t)\sum_{i=1}^{n}m_i(t)\\
&=\sum_{i=1}^{n}m_i(t)^2-n\bar{m}(t)^2 \\
&\leq \left(\sum_{i=1}^{n}m_i(t)\right)^2-n\bar{m}(t)^2 \\
&=n(n-1)\bar{m}(t)^2 \\
\end{split}
\end{equation*}
that
$$
\left(\sum_{i=1}^{n}\left(m_i(t)-\bar{m}(t)\right)^2\right)^{\frac{1}{2}}\leq \sqrt{n(n-1)}\bar{m}(t),
$$
which completes the proof.
\end{proof}

\begin{lemma}\label{ineq3}
$$
\max_{i\in \mathcal{I}_n}|m_i(t)-\bar{m}(t)|\leq\left(\frac{n-1}{n}\sum_{i=1}^{n}\left(m_i(t)-\bar{m}(t)\right)^2\right)^{\frac{1}{2}}
$$
\end{lemma}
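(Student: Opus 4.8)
The plan is to reduce the statement to a single application of the Cauchy–Schwarz inequality, exploiting the fact that the deviations from the mean sum to zero. Write $\delta_i(t)=m_i(t)-\bar m(t)$ for $i\in\mathcal I_n$. From the definition of $\bar m(t)$ we have $\sum_{i=1}^n m_i(t)=n\bar m(t)$, hence $\sum_{i=1}^n\delta_i(t)=0$; this linear constraint is the only structural fact I will need.

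Next I would fix, at the given time $t$, an index $k\in\mathcal I_n$ achieving the maximum, so that $|\delta_k(t)|=\max_{i\in\mathcal I_n}|m_i(t)-\bar m(t)|$. Using $\sum_i\delta_i(t)=0$ I can write $\delta_k(t)=-\sum_{i\neq k}\delta_i(t)$, and then apply Cauchy–Schwarz to the $n-1$ terms on the right:
$$
\delta_k(t)^2=\Bigl(\sum_{i\neq k}\delta_i(t)\Bigr)^2\leq(n-1)\sum_{i\neq k}\delta_i(t)^2=(n-1)\Bigl(\sum_{i=1}^n\delta_i(t)^2-\delta_k(t)^2\Bigr).
$$
Rearranging gives $n\,\delta_k(t)^2\leq(n-1)\sum_{i=1}^n\delta_i(t)^2$, i.e. $\delta_k(t)^2\leq\frac{n-1}{n}\sum_{i=1}^n\delta_i(t)^2$. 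Taking square roots and recalling the choice of $k$ yields exactly the claimed bound.

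There is no serious obstacle here; the only thing to be careful about is to record that $\sum_i(m_i(t)-\bar m(t))=0$ explicitly before invoking Cauchy–Schwarz, since that identity is precisely what makes the constant $\frac{n-1}{n}$ (rather than $1$) achievable. One could also note that the inequality is tight, attained when one deviation equals $-(n-1)$ times a common value of the remaining $n-1$ deviations, which confirms the constant cannot be improved, though this remark is not needed for the proof.
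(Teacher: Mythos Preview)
Your argument is correct and hinges on the same structural fact as the paper's proof, namely $\sum_{i}\delta_i(t)=0$ so that $\delta_k(t)=-\sum_{i\neq k}\delta_i(t)$; from there both proofs derive $\sum_{i\neq k}\delta_i(t)^2\geq\frac{1}{n-1}\delta_k(t)^2$ and rearrange identically. The only difference is in how that last inequality is obtained: the paper sets up the constrained minimization $\min\sum_{i\neq k}z_i^2$ subject to $\sum_{i\neq k}z_i=-z_k$ and solves it with a Lagrange multiplier, whereas you invoke Cauchy--Schwarz $\bigl(\sum_{i\neq k}\delta_i\bigr)^2\le(n-1)\sum_{i\neq k}\delta_i^2$ directly. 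Your route is shorter and more elementary, and your remark on tightness (one deviation equal to $-(n-1)$ times the common value of the others) is exactly the Lagrange minimizer the paper computes, so the two approaches coincide at the equality case as well.
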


\begin{proof}
First of all, we split $H(t)$ into two terms as follows
\begin{equation}\label{split}
    \sum_{i=1}^{n}\left(m_i(t)-\bar{m}(t)\right)^2=\left(m_k(t)-\bar{m}(t)\right)^2+\sum_{i=1,i\neq k}^{n}\left(m_i(t)-\bar{m}(t)\right)^2 \quad \forall k\in \mathcal{I}_n
\end{equation}
Next, define $z_i(t)=m_i(t)-\bar{m}(t)$, and we formulate the constrained optimization problem
\begin{equation}\label{op}
    \min\sum_{i=1,i\neq k}^{n}z_i(t)^2,
\end{equation}
which is subject to $\sum_{i=1,i\neq k}^{n}z_i(t)=-z_k(t)$, since we have $\sum_{i=1}^{n}z_i(t)=0$. To solve optimization problem (\ref{op}), we introduce the Lagrange function
$$
\mathcal{L}(z_1,z_2,...,z_{n},c)=\sum_{i=1,i\neq k}^{n}z_i(t)^2-c\sum_{i=1}^{n}z_i(t)
$$
By solving the system of equation
\begin{equation*}
\begin{split}
\frac{\partial \mathcal{L}}{\partial z_i}&=0, \quad i\in \mathcal{I}_n, i\neq k\\
\frac{\partial \mathcal{L}}{\partial c}&=\sum_{i=1}^{n}z_i(t)=0
\end{split}
\end{equation*}
we get $z_i(t)=-\frac{z_k(t)}{n-1}, i\in \mathcal{I}_n, i\neq k$ and the minimum of optimization problem (\ref{op})
$$
 \min\sum_{i=1,i\neq k}^{n+1}z_i(t)^2=\frac{z_k(t)^2}{n-1},
$$
which implies
$$
\sum_{i=1,i\neq k}^{n}\left(m_i(t)-\bar{m}(t)\right)^2\geq\frac{1}{n-1}\left(m_k(t)-\bar{m}(t)\right)^2
$$
Then it follows from equation (\ref{split}) that
\begin{equation*}
\begin{split}
    \sum_{i=1}^{n}\left(m_i(t)-\bar{m}(t)\right)^2&\geq\left(m_k(t)-\bar{m}(t)\right)^2+\frac{1}{n-1}\left(m_k(t)-\bar{m}(t)\right)^2\\
    &= \frac{n}{n-1}\left(m_k(t)-\bar{m}(t)\right)^2, \quad \forall k\in \mathcal{I}_n
\end{split}
\end{equation*}
which indicates
$$
|m_k(t)-\bar{m}(t)|\leq \left(\frac{n-1}{n}\sum_{i=1}^{n}\left(m_i(t)-\bar{m}(t)\right)^2\right)^{\frac{1}{2}}, \quad \forall k\in \mathcal{I}_n
$$
Therefore, we conclude
$$
\max_{i\in \mathcal{I}_n}|m_i(t)-\bar{m}(t)|\leq\left(\frac{n-1}{n}\sum_{i=1}^{n}\left(m_i(t)-\bar{m}(t)\right)^2\right)^{\frac{1}{2}}
$$
The proof is thus completed.
\end{proof}

\begin{lemma}\label{ineq4}
The solution to the following differential inequality
\begin{equation}\label{dHt_ine}
\frac{dH(t)}{dt}\leq -\xi(t) H(t)+\zeta(t)\sqrt{H(t)}
\end{equation}
gives
$$
\sqrt{H(t)}\leq \sqrt{H(0)}~e^{-\frac{1}{2}\int_{0}^{t}\xi(\tau)d\tau}+\frac{1}{2}\int_{0}^{t}e^{\frac{1}{2}\int_{t}^{\tau}\xi(s)ds}\zeta(\tau)d\tau
$$
where $\xi(t)$ and $\zeta(t)$ are non-negative continuous functions with respect to time $t$.
\end{lemma}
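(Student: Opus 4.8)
The plan is to reduce the nonlinear differential inequality \eqref{dHt_ine} to a linear first-order one via the substitution $W(t)=\sqrt{H(t)}$ and then close the estimate with a standard integrating-factor (Gr\"onwall-type) comparison. Heuristically, at instants where $H(t)>0$ we have $\dot W=\dot H/(2\sqrt H)$, so dividing \eqref{dHt_ine} by $2\sqrt{H(t)}$ gives $\dot W(t)\le -\tfrac12\xi(t)W(t)+\tfrac12\zeta(t)$, and integrating this linear inequality produces exactly the asserted bound. The only genuine obstacle is that $\sqrt{H(t)}$ need not be differentiable where $H(t)=0$, so the division is not directly justified; I would circumvent this by a small regularization.

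Concretely, fix $\delta>0$ and set $W_\delta(t)=\sqrt{H(t)+\delta^2}$, which is continuously differentiable and satisfies $W_\delta(t)\ge\delta$. Using \eqref{dHt_ine}, the identity $H(t)=W_\delta(t)^2-\delta^2$, the bound $\sqrt{H(t)}\le W_\delta(t)$, and $\delta^2/W_\delta(t)\le\delta$, I obtain
\[
\dot W_\delta(t)=\frac{\dot H(t)}{2W_\delta(t)}\le\frac{-\xi(t)H(t)+\zeta(t)\sqrt{H(t)}}{2W_\delta(t)}\le -\tfrac12\xi(t)W_\delta(t)+\tfrac12\zeta(t)+\tfrac12\xi(t)\delta .
\]
Multiplying by the integrating factor $\mu(t)=\exp\!\big(\tfrac12\int_0^t\xi(s)\,ds\big)$ (finite and continuous because $\xi$ is continuous), this becomes $\tfrac{d}{dt}\big(\mu(t)W_\delta(t)\big)\le\mu(t)\big(\tfrac12\zeta(t)+\tfrac12\xi(t)\delta\big)$; integrating from $0$ to $t$, dividing by $\mu(t)$, and noting $\mu(\tau)/\mu(t)=e^{\frac12\int_t^\tau\xi(s)ds}$ yields
\[
W_\delta(t)\le\sqrt{H(0)+\delta^2}\;e^{-\frac12\int_0^t\xi(s)ds}+\frac12\int_0^t e^{\frac12\int_t^\tau\xi(s)ds}\big(\zeta(\tau)+\xi(\tau)\delta\big)\,d\tau .
\]

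Finally I would let $\delta\to0^+$: the left-hand side decreases to $\sqrt{H(t)}$, the term $\sqrt{H(0)+\delta^2}$ tends to $\sqrt{H(0)}$, and the extra contribution $\tfrac{\delta}{2}\int_0^t e^{\frac12\int_t^\tau\xi(s)ds}\xi(\tau)\,d\tau$ vanishes since the integral is finite on the compact interval $[0,t]$. This gives precisely
\[
\sqrt{H(t)}\le \sqrt{H(0)}\;e^{-\frac12\int_0^t\xi(\tau)d\tau}+\frac12\int_0^t e^{\frac12\int_t^\tau\xi(s)ds}\zeta(\tau)\,d\tau ,
\]
completing the proof. As noted, the main point requiring care is the passage to the limit through the zeros of $H$, which the $\delta$-regularization handles; everything else is routine manipulation of the linear comparison estimate.
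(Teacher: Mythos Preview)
Your argument is correct and follows essentially the same route as the paper: rewrite $\dot H$ in terms of $\sqrt{H}$ to obtain the linear first-order inequality $\tfrac{d}{dt}\sqrt{H}\le -\tfrac12\xi\sqrt{H}+\tfrac12\zeta$, then integrate via the integrating factor $e^{\frac12\int_0^t\xi}$ (the paper phrases this last step as an appeal to the Comparison Lemma). The only difference is that the paper handles the zeros of $H$ by a bare case split---dividing by $2\sqrt{H}$ when $H\neq0$ and checking the final inequality separately when $H=0$---whereas your $\delta$-regularization $W_\delta=\sqrt{H+\delta^2}$ makes the passage through zeros fully rigorous; this is a refinement rather than a different strategy.
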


\begin{proof}
Since $H(t)\geq0$, we have
$$
\frac{dH(t)}{dt}=\frac{d\left(\sqrt{H(t)}\right)^2}{dt}=2\sqrt{H(t)}\cdot\frac{d\sqrt{H(t)}}{dt}.
$$
Then the inequality (\ref{dHt_ine}) can be rewritten as
\begin{equation}\label{sqHt}
2\sqrt{H(t)}\cdot\frac{d\sqrt{H(t)}}{dt}\leq -\xi(t)H(t)+\zeta(t)\sqrt{H(t)}
\end{equation}
For $H(t)\neq0$, the inequality (\ref{sqHt}) can be simplified as
$$
\frac{d\sqrt{H(t)}}{dt}\leq -\frac{\xi(t)}{2}\sqrt{H(t)}+\frac{\zeta(t)}{2}
$$
Then it follows from Comparison Lemma \cite{khal} that
$$
\sqrt{H(t)}\leq \sqrt{H(0)}~e^{-\frac{1}{2}\int_{0}^{t}\xi(\tau)d\tau}+\frac{1}{2}\int_{0}^{t}e^{\frac{1}{2}\int_{t}^{\tau}\xi(s)ds}\zeta(\tau)d\tau
$$
For $H(t)=0$, the above inequality still holds, since both $\xi(t)$ and $\zeta(t)$ are non-negative. This completes the proof.
\end{proof}

The time derivative of $H(t)$ with respect to the trajectory of system (\ref{sys}) is given by
\begin{equation*}
    \frac{dH(t)}{dt}=2\kappa\sum_{i=1}^{n}\left(\frac{dm_i(t)}{dt}-\frac{d\bar{m}(t)}{dt}\right)\left(m_i(t)-\bar{m}(t)\right)
\end{equation*}
where the time derivative of $\bar{m}(t)$ is given by
$$
\frac{d\bar{m}(t)}{dt}=\frac{v}{n}\int_{x_{0}(vt+\epsilon)}^{x_{n}(vt+\epsilon)}\rho(x,vt+\epsilon)dx=\frac{v}{n}\int_{g_{a}(vt+\epsilon)}^{g_{b}(vt+\epsilon)}\rho(x,vt+\epsilon)dx
$$
and
\begin{equation*}
\begin{split}
\frac{dm_i(t)}{dt}
&=\frac{d}{dt}\left(\int_{0}^{vt+\epsilon}\int_{x_{i-1}}^{x_i}\rho(x,y)dxdy\right)\\
&=\frac{d}{dt}\left(\int_{0}^{vt}\int_{x_{i-1}}^{x_i}\rho(x,y)dxdy+\int_{vt}^{vt+\epsilon}\int_{x_{i-1}}^{x_i}\rho(x,y)dxdy\right)\\
&=\frac{d}{dt}\left(\int_{0}^{vt}\int_{x_{i-1}}^{x_i}\rho(x,y)dxdy\right)+\frac{d}{dt}\left(\int_{vt}^{vt+\epsilon}\int_{x_{i-1}}^{x_i}\rho(x,y)dxdy\right)\\
&=v\int_{x_{i-1}(vt+\epsilon)}^{x_{i}(vt+\epsilon)}\rho(x,vt+\epsilon)dx+\int_{vt}^{vt+\epsilon}\frac{\partial}{\partial t}\int_{x_{i-1}}^{x_i}\rho(x,y)dxdy\\
&=v\int_{x_{i-1}(vt+\epsilon)}^{x_{i}(vt+\epsilon)}\rho(x,vt+\epsilon)dx+\int_{vt}^{vt+\epsilon}\left[\rho(x_{i},y)\dot{x}_{i}-\rho(x_{i-1},y)\dot{x}_{i-1}\right]dy\\
&=v\int_{x_{i-1}(vt+\epsilon)}^{x_{i}(vt+\epsilon)}\rho(x,vt+\epsilon)dx+\dot{x}_{i}\int_{vt}^{vt+\epsilon}\rho(x_{i},y)dy-\dot{x}_{i-1}\int_{vt}^{vt+\epsilon}\rho(x_{i-1},y)dy\\
&=v\int_{x_{i-1}(vt+\epsilon)}^{x_{i}(vt+\epsilon)}\rho(x,vt+\epsilon)dx+\kappa\left(m_{i+1}(t)-m_{i}(t)\right)\int_{vt}^{vt+\epsilon}\rho(x_{i},y)dy\\
&-\kappa\left(m_{i}(t)-m_{i-1}(t)\right)\int_{vt}^{vt+\epsilon}\rho(x_{i-1},y)dy
\end{split}
\end{equation*}
In addition, we have
$$
\dot{x}_0=\frac{dg_a(vt+\epsilon)}{dt}=v\frac{dg_a(y)}{dy}|_{y=vt+\epsilon}=vg^{'}_a(vt+\epsilon),  \quad \dot{x}_{n}=\frac{dg_b(vt+\epsilon)}{dt}=v\frac{dg_b(y)}{dy}|_{y=vt+\epsilon}=vg^{'}_b(vt+\epsilon)
$$
and the time derivatives of $m_1(t)$ and $m_{n}(t)$ are presented as
\begin{equation*}
\begin{split}
\frac{dm_1(t)}{dt}&=\frac{d}{dt}\int_{0}^{vt+\epsilon}\int_{g_a(y)}^{x_1}\rho(x,y)dxdy \\
&=v\int_{g_a(vt+\epsilon)}^{x_1(vt+\epsilon)}\rho(x,vt+\epsilon)dx+\dot{x}_{1}\int_{vt}^{vt+\epsilon}\rho(x_{1},y)dy-g^{'}_a(vt+\epsilon)v\int_{vt}^{vt+\epsilon}\rho(g_a(y),y)dy
\end{split}
\end{equation*}
and
\begin{equation*}
\begin{split}
\frac{dm_{n}(t)}{dt}&=\frac{d}{dt}\int_{0}^{vt+\epsilon}\int_{x_{n-1}}^{g_b(y)}\rho(x,y)dxdy \\
&=v\int_{x_{n-1}(vt+\epsilon)}^{g_b(vt+\epsilon)}\rho(x,vt+\epsilon)dx+g^{'}_b(vt+\epsilon)v\int_{vt}^{vt+\epsilon}\rho(g_b(y),y)dy-\dot{x}_{n-1}\int_{vt}^{vt+\epsilon}\rho(x_{n-1},y)dy,
\end{split}
\end{equation*}
respectively. Next, we present theoretical results on the stability of multi-agent system (\ref{sys}) as follows.
\begin{theorem}
Multi-agent system (\ref{sys}) with the control input (\ref{control}) is input-to-state stable.
\end{theorem}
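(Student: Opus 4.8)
The plan is to show that the workload-imbalance ``energy'' $H(t)$ obeys a differential inequality of the exact shape (\ref{dHt_ine}) with a \emph{constant} decay rate $\xi>0$ and a bounded forcing term $\zeta(t)$, and then to read off the ISS estimate (\ref{iss}) by applying Lemma \ref{ineq4}. Throughout I set $z_i(t):=m_i(t)-\bar m(t)$, so that $\|z(t)\|=\sqrt{H(t)}$ plays the role of the state norm and $\sum_{i=1}^n z_i(t)=0$.

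First I would assemble $\frac{dH(t)}{dt}=2\sum_{i=1}^{n}z_i(t)\big(\dot m_i(t)-\dot{\bar m}(t)\big)$ from the expressions for $\dot m_i$, $\dot m_1$, $\dot m_n$ and $\dot{\bar m}$ already computed in the excerpt. Each term $\dot m_i-\dot{\bar m}$ decomposes into: (i) a ``front–advance'' part $v\int_{x_{i-1}}^{x_i}\rho(x,vt+\epsilon)\,dx-\tfrac{v}{n}\int_{g_a}^{g_b}\rho(x,vt+\epsilon)\,dx$, together with the boundary-curvature contributions $vg_a'(vt+\epsilon)\int_{vt}^{vt+\epsilon}\rho(g_a(y),y)\,dy$ and $vg_b'(vt+\epsilon)\int_{vt}^{vt+\epsilon}\rho(g_b(y),y)\,dy$ entering only at $i=1$ and $i=n$ — all of these are bounded on $[0,T_p]$ by Assumption 2.1, compactness of $\Omega$, and smoothness of the boundaries, and I collect them into a vector $w(t)$ with $\|w(t)\|$ bounded by a constant; and (ii) the feedback part $\kappa(m_{i+1}-m_i)a_i(t)-\kappa(m_i-m_{i-1})a_{i-1}(t)$, where $a_i(t):=\int_{vt}^{vt+\epsilon}\rho(x_i,y)\,dy\in[\underline\rho\,\epsilon,\ \bar\rho\,\epsilon]$, with the term $(m_{i+1}-m_i)a_i$ absent at $i=n$ (since agent $n$ moves along $g_b$, not by feedback) and the term $(m_i-m_{i-1})a_{i-1}$ absent at $i=1$.

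The key algebraic step is a discrete summation by parts: writing $m_{i+1}-m_i=z_{i+1}-z_i$ and telescoping the feedback part yields
\[
2\kappa\sum_{i} z_i\big[(z_{i+1}-z_i)a_i-(z_i-z_{i-1})a_{i-1}\big]=-2\kappa\sum_{i=1}^{n-1}a_i\big(m_{i+1}(t)-m_i(t)\big)^2 .
\]
Using $a_i\ge\underline\rho\,\epsilon$, Cauchy--Schwarz on $\sum z_i w_i$, and the left inequality of Lemma \ref{ineq}, this gives
\[
\frac{dH(t)}{dt}\le 2\|w(t)\|\sqrt{H(t)}-2\kappa\underline\rho\,\epsilon\sum_{i=1}^{n-1}\big(m_{i+1}(t)-m_i(t)\big)^2\le-\frac{2\kappa\underline\rho\,\epsilon\,\lambda_{\min}}{n}\,H(t)+2\|w(t)\|\sqrt{H(t)} ,
\]
which is (\ref{dHt_ine}) with the \emph{positive constant} $\xi:=2\kappa\underline\rho\,\epsilon\,\lambda_{\min}/n$ (positive by Assumption 2.1 and positive-definiteness of $\Gamma$ in Lemma \ref{ineq}) and the non-negative continuous bounded function $\zeta(t):=2\|w(t)\|$.

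Finally, Lemma \ref{ineq4} yields
\[
\sqrt{H(t)}\le\sqrt{H(t_0)}\,e^{-\frac{\xi}{2}(t-t_0)}+\frac12\int_{t_0}^{t}e^{-\frac{\xi}{2}(t-\tau)}\zeta(\tau)\,d\tau\le\sqrt{H(t_0)}\,e^{-\frac{\xi}{2}(t-t_0)}+\frac{1}{\xi}\sup_{t_0\le\tau\le t}\zeta(\tau) .
\]
Identifying $\|z(t)\|=\sqrt{H(t)}$, $\beta(r,s):=r\,e^{-\xi s/2}\in\mathcal{KL}$ and $\alpha(r):=r/\xi\in\mathcal{K}$, this is exactly (\ref{iss}), so system (\ref{sys}) with control (\ref{control}) is input-to-state stable. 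I expect the main obstacle to be the bookkeeping in the summation-by-parts step: correctly treating the two end agents (agent $n$ obeys a different law, and the $x_0,x_n$ boundary-motion terms must be assigned to the disturbance $w$ rather than to the feedback quadratic form), and verifying that no residual cross term survives that would spoil the negative sign of $-2\kappa\sum_{i=1}^{n-1}a_i(m_{i+1}-m_i)^2$.
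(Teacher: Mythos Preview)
Your argument is essentially correct and follows the same skeleton as the paper's proof: compute $\dot H$, perform the discrete summation by parts on the feedback contributions to extract $-2\kappa\sum_{i=1}^{n-1}a_i(m_{i+1}-m_i)^2$, bound $a_i\ge\underline\rho\,\epsilon$, invoke the left inequality of Lemma~\ref{ineq}, and then feed the resulting differential inequality into Lemma~\ref{ineq4}. Your handling of the two end agents (assigning the $g_a',g_b'$ boundary-motion terms to the disturbance rather than to the feedback quadratic) is exactly what the paper does.

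The one genuine difference is \emph{what you call the input}. You collect the front-advance and boundary-curvature terms into an exogenous forcing $w(t)$, set $\zeta(t)=2\|w(t)\|$, and read off the ISS estimate with $\alpha(r)=r/\xi$ acting on $\sup\zeta$. The paper instead takes the additional step of re-expressing $\sqrt{H(t)}$ on the right-hand side in terms of the control vector $u(t)=(u_1,\dots,u_{n-1})$ via the left inequality of Lemma~\ref{ineq} (namely $\sqrt{H}\le\kappa^{-1}\sqrt{n/\lambda_{\min}}\,\|u\|_2$), obtaining $\dot H\le-\xi H+\Theta(t)\|u\|_2$, and then claims ISS with respect to $u$. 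Your route yields a cleaner class-$\mathcal K$ gain (a genuine $r\mapsto r/\xi$, since $\xi$ is constant), whereas the paper's final bound $\|z(t)\|\le\|z(t_0)\|e^{-\tfrac12\int\xi}+\bigl(\int_{t_0}^t\Theta\|u\|\bigr)^{1/2}$ still carries an explicit $t$-dependence in the second term. On the other hand, the paper's choice matches the literal statement of the theorem, which speaks of ISS of system~(\ref{sys}) with input~(\ref{control}); in that reading the ``input'' is $u$, not the boundary forcing $\zeta$. If you want your write-up to match the theorem as phrased, insert the one-line conversion $\sqrt{H}\le\kappa^{-1}\sqrt{n/\lambda_{\min}}\,\|u\|_2$ before integrating.

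Two minor bookkeeping remarks: your decay constant $\xi=2\kappa\underline\rho\,\epsilon\,\lambda_{\min}/n$ carries the factor $2$ from $\dot H=2\sum z_i\dot z_i$, which the paper silently drops when writing $\xi(t)=\kappa\underline\rho\,\epsilon\,\lambda_{\min}/n$; your version is the consistent one. Also, where you bound $\sum z_i w_i$ by a single Cauchy--Schwarz, the paper splits the disturbance into two pieces and uses Lemma~\ref{ineq2} and Lemma~\ref{ineq3} separately; this only affects the explicit form of $\zeta(t)$, not the ISS conclusion.
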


\begin{proof}
The time derivative of $H(t)$ with respect to the trajectory of dynamical system (\ref{sys}) is given by
\begin{equation}\label{dHt}
\begin{split}
\frac{dH(t)}{dt}&=2\sum_{i=1}^{n}\left(m_i(t)-\bar{m}(t)\right)\left(\dot{x}_{i}\int_{vt}^{vt+\epsilon}\rho(x_{i},y)dy-\dot{x}_{i-1}\int_{vt}^{vt+\epsilon}\rho(x_{i-1},y)dy\right)\\
&+2v\sum_{i=1}^{n}\left(m_i(t)-\bar{m}(t)\right)\left(\int_{x_{i-1}(vt+\epsilon)}^{x_{i}(vt+\epsilon)}\rho(x,vt+\epsilon)dx-\frac{1}{n}\int_{g_a(vt+\epsilon)}^{g_b(vt+\epsilon)}\rho(x,vt+\epsilon)dx\right)\\
\end{split}
\end{equation}
The first term in equation (\ref{dHt}) can be further expressed as
\begin{equation*}
\begin{split}
&~~~~\sum_{i=1}^{n}\left(m_i(t)-\bar{m}(t)\right)\left(\dot{x}_{i}\int_{vt}^{vt+\epsilon}\rho(x_{i},y)dy-\dot{x}_{i-1}\int_{vt}^{vt+\epsilon}\rho(x_{i-1},y)dy\right)\\
&=\sum_{i=1}^{n-1}\left(m_i(t)-\bar{m}(t)\right)\dot{x}_{i}\int_{vt}^{vt+\epsilon}\rho(x_{i},y)dy+ \left(m_{n}(t)-\bar{m}(t)\right)\dot{x}_{n}\int_{vt}^{vt+\epsilon}\rho(x_{n},y)dy\\
&-\sum_{i=2}^{n}\left(m_i(t)-\bar{m}(t)\right)\dot{x}_{i-1}\int_{vt}^{vt+\epsilon}\rho(x_{i-1},y)dy-\left(m_1(t)-\bar{m}(t)\right)\dot{x}_{0}\int_{vt}^{vt+\epsilon}\rho(x_{0},y)dy\\
&=\sum_{i=1}^{n-1}\left(m_i(t)-\bar{m}(t)\right)\dot{x}_{i}\int_{vt}^{vt+\epsilon}\rho(x_{i},y)dy
+ v\left(m_{n}(t)-\bar{m}(t)\right)g^{'}_b(vt+\epsilon)\int_{vt}^{vt+\epsilon}\rho(g_b(y),y)dy\\
&-\sum_{i=1}^{n-1}\left(m_{i+1}(t)-\bar{m}(t)\right)\dot{x}_{i}\int_{vt}^{vt+\epsilon}\rho(x_{i},y)dy
- v\left(m_1(t)-\bar{m}(t)\right)g^{'}_a(vt+\epsilon)\int_{vt}^{vt+\epsilon}\rho(g_a(y),y)dy\\
&=-\kappa\sum_{i=1}^{n-1}\left(m_{i+1}(t)-m_i(t)\right)^2\int_{vt}^{vt+\epsilon}\rho(x_{i},y)dy+ v\left(m_{n}(t)-\bar{m}(t)\right)g^{'}_b(vt+\epsilon)\int_{vt}^{vt+\epsilon}\rho(g_b(y),y)dy\\
&- v\left(m_1(t)-\bar{m}(t)\right)g^{'}_a(vt+\epsilon)\int_{vt}^{vt+\epsilon}\rho(g_a(y),y)dy\\
\end{split}
\end{equation*}
Considering that inequalities
$$
-\kappa\sum_{i=1}^{n-1}\left(m_{i+1}(t)-m_i(t)\right)^2\int_{vt}^{vt+\epsilon}\rho(x_{i},y)dy\leq -\kappa\underline{\rho}\epsilon\sum_{i=1}^{n-1}\left(m_{i+1}(t)-m_{i}(t)\right)^2
$$
and
\begin{equation*}
\begin{split}
&~~~~v\left(m_{n}(t)-\bar{m}(t)\right)g^{'}_b(vt+\epsilon)\int_{vt}^{vt+\epsilon}\rho(g_b(y),y)dy \\
&-v\left(m_1(t)-\bar{m}(t)\right)g^{'}_a(vt+\epsilon)\int_{vt}^{vt+\epsilon}\rho(g_a(y),y)dy \\
&\leq 2v\epsilon\bar{\rho}\max_{i\in \mathcal{I}_n}|m_i(t)-\bar{m}(t)|\cdot\max\{|g^{'}_a(vt+\epsilon)|,|g^{'}_b(vt+\epsilon)|\},
\end{split}
\end{equation*}
hold, the first term in equation (\ref{dHt}) can be estimated as follows
\begin{equation*}
\begin{split}
&~~~~\sum_{i=1}^{n}\left(m_i(t)-\bar{m}(t)\right)\left(\dot{x}_{i}\int_{vt}^{vt+\epsilon}\rho(x_{i},y)dy-\dot{x}_{i-1}\int_{vt}^{vt+\epsilon}\rho(x_{i-1},y)dy\right)\\
&\leq-\kappa\underline{\rho}\epsilon\sum_{i=1}^{n-1}\left(m_{i+1}(t)-m_{i}(t)\right)^2+2v\epsilon\bar{\rho}\max_{i\in \mathcal{I}_n}|m_i(t)-\bar{m}(t)|\cdot\max\{|g^{'}_a(vt+\epsilon)|,|g^{'}_b(vt+\epsilon)|\}\\
&=-\frac{\kappa\underline{\rho}\epsilon\lambda_{min}}{n}H(t)+2v\epsilon\bar{\rho}\max_{i\in \mathcal{I}_n}|m_i(t)-\bar{m}(t)|\cdot\max\{|g^{'}_a(vt+\epsilon)|,|g^{'}_b(vt+\epsilon)|\}\\
&\leq -\frac{\kappa\underline{\rho}\epsilon\lambda_{min}}{n}H(t)+2v\epsilon\bar{\rho}\sqrt{\frac{n-1}{n}}\max\{|g^{'}_a(vt+\epsilon)|,|g^{'}_b(vt+\epsilon)|\}\sqrt{H(t)}\\
\end{split}
\end{equation*}
For the second term, it follows from Cauchy-Schwarz inequality and Lemma \ref{ineq2} that
\begin{equation*}
\begin{split}
&~~~~\sum_{i=1}^{n}\left(m_i(t)-\bar{m}(t)\right)\left(\int_{x_{i-1}(vt+\epsilon)}^{x_{i}(vt+\epsilon)}\rho(x,vt+\epsilon)dx-\frac{1}{n}\int_{g_a(vt+\epsilon)}^{g_b(vt+\epsilon)}\rho(x,vt+\epsilon)dx\right)\\
&\leq \left[\sum_{i=1}^{n}\left(m_i(t)-\bar{m}(t)\right)^2\right]^{\frac{1}{2}}\left[\sum_{i=1}^{n}\left(\int_{x_{i-1}(vt+\epsilon)}^{x_{i}(vt+\epsilon)}\rho(x,vt+\epsilon)dx-\frac{1}{n}\int_{g_a(vt+\epsilon)}^{g_b(vt+\epsilon)}\rho(x,vt+\epsilon)dx\right)^2\right]^{\frac{1}{2}}\\
&=\sqrt{H(t)}\left[\sum_{i=1}^{n}\left(\int_{x_{i-1}(vt+\epsilon)}^{x_{i}(vt+\epsilon)}\rho(x,vt+\epsilon)dx-\frac{1}{n}\int_{g_a(vt+\epsilon)}^{g_b(vt+\epsilon)}\rho(x,vt+\epsilon)dx\right)^2\right]^{\frac{1}{2}}\\
&\leq\sqrt{\frac{n-1}{n}}\cdot \bar{\rho}\left[g_b(vt+\epsilon)-g_a(vt+\epsilon)\right]\cdot\sqrt{H(t)}
\end{split}
\end{equation*}
Therefore, the time derivative of $H(t)$ can be estimated as follows
\begin{equation}\label{dhdt}
\begin{split}
\frac{dH(t)}{dt}
&\leq-\frac{\kappa\underline{\rho}\epsilon\lambda_{min}}{n}H(t)+2v\epsilon\bar{\rho}\sqrt{\frac{n-1}{n}}\max\{|g^{'}_a(vt+\epsilon)|,|g^{'}_b(vt+\epsilon)|\}\sqrt{H(t)}\\
&+\sqrt{\frac{n-1}{n}}\cdot \bar{\rho}\left[g_b(vt+\epsilon)-g_a(vt+\epsilon)\right]\cdot\sqrt{H(t)}\\
&=-\frac{\kappa\underline{\rho}\epsilon\lambda_{min}}{n}H(t)\\
&+\bar{\rho}\left(2v\epsilon\max\{|g^{'}_a(vt+\epsilon)|,|g^{'}_b(vt+\epsilon)|\}+g_b(vt+\epsilon)-g_a(vt+\epsilon)\right)\sqrt{\frac{n-1}{n}}\sqrt{H(t)}
\end{split}
\end{equation}
According to Lemma \ref{ineq}, we have
$$
\sqrt{H(t)}\leq \left(\frac{n}{\lambda_{min}} \sum_{i=1}^{n-1}\left(m_{i+1}(t)-m_{i}(t)\right)^2\right)^{\frac{1}{2}}=\frac{1}{\kappa}\sqrt{\frac{n}{\lambda_{min}}}\|u(t)\|_2
$$
where $u(t)=(u_1(t),u_2(t),...,u_{n-1}(t))\in R^{n-1}$. Thus, the inequality (\ref{dhdt}) can be further expressed as
\begin{equation}\label{difinq}
\begin{split}
\frac{dH(t)}{dt}
&\leq-\frac{\kappa\underline{\rho}\epsilon\lambda_{min}}{n}H(t)\\
&+\frac{\bar{\rho}}{\kappa}\left(2v\epsilon\max\{|g^{'}_a(vt+\epsilon)|,|g^{'}_b(vt+\epsilon)|\}+g_b(vt+\epsilon)-g_a(vt+\epsilon)\right)\sqrt{\frac{n-1}{\lambda_{\min}}}\|u(t)\|_2\\
&=-\xi(t)H(t)+\Theta(t)\|u(t)\|_2
\end{split}
\end{equation}
where
\begin{equation}\label{xi}
\xi(t)=\frac{\kappa\underline{\rho}\epsilon\lambda_{min}}{n}
\end{equation}
and
$$
\Theta(t)=\frac{\bar{\rho}}{\kappa}\left(2v\epsilon\max\{|g^{'}_a(vt+\epsilon)|,|g^{'}_b(vt+\epsilon)|\}+g_b(vt+\epsilon)-g_a(vt+\epsilon)\right)\sqrt{\frac{n-1}{\lambda_{\min}}}
$$
Solving the differential inequality (\ref{difinq}) gives
\begin{equation}\label{soldif}
H(t)\leq H(t_0)~e^{-\int_{t_0}^{t}\xi(\tau)d\tau}+\int_{t_0}^{t}e^{\int_{t}^{\tau}\xi(s)ds}\Theta(\tau)\|u(\tau)\|_2d\tau
\end{equation}
Define $ z(t)=\left(z_1(t),z_2(t),...,z_{n}(t)\right)^T\in R^{n}$ with $z_i(t)=m_i(t)-\bar{m}(t),i\in\mathcal{I}_n$. Then we can get $\sqrt{H(t)}=\|z(t)\|_2$. Since $t_0\leq\tau\leq t$ in inequality (\ref{dHt1}), we have $e^{\int_{t}^{\tau}\xi(s)ds}\leq 1$. Thus, the inequality (\ref{soldif}) can be rewritten as
\begin{equation*}
\begin{split}
\|z(t)\|^2_2&\leq\|z(t_0)\|^2_2~e^{-\int_{t_0}^{t}\xi(\tau)d\tau}+\int_{t_0}^{t}e^{\int_{t}^{\tau}\xi(s)ds}\Theta(\tau)\|u(\tau)\|_2d\tau\\
&\leq\|z(t_0)\|^2_2~e^{-\int_{t_0}^{t}\xi(\tau)d\tau}+\int_{t_0}^{t}\Theta(\tau)\|u(\tau)\|_2d\tau \\
\end{split}
\end{equation*}
which is equivalent to
\begin{equation*}
\begin{split}
\|z(t)\|_2&\leq\left(\|z(t_0)\|^2_2~e^{-\int_{t_0}^{t}\xi(\tau)d\tau}+\int_{t_0}^{t}\Theta(\tau)\|u(\tau)\|_2d\tau\right)^{\frac{1}{2}}\\
&\leq\|z(t_0)\|_2~e^{-\frac{1}{2}\int_{t_0}^{t}\xi(\tau)d\tau}+\left(\int_{t_0}^{t}\Theta(\tau)\|u(\tau)\|_2~d\tau \right)^{\frac{1}{2}} \\
\end{split}
\end{equation*}
It follows from Definition \ref{def_iss} that multi-agent system (\ref{sys}) with the control input (\ref{control}) is input-to-state stable. The proof is thus completed.
\end{proof}
Actually, the inequality (\ref{dhdt}) can be rewritten as
$$
\frac{dH(t)}{dt}\leq-\xi(t)H(t)+\zeta(t)\sqrt{H(t)}
$$
where
\begin{equation}\label{zeta}
\zeta(t)=\bar{\rho}\left(2v\epsilon\max\{|g^{'}_a(vt+\epsilon)|,|g^{'}_b(vt+\epsilon)|\}+g_b(vt+\epsilon)-g_a(vt+\epsilon)\right)\sqrt{\frac{n-1}{n}}
\end{equation}
From Lemma \ref{ineq4}, we derive
\begin{equation}\label{dHt1}
\sqrt{H(t)}\leq \sqrt{H(t_0)}e^{-\frac{1}{2}\int_{t_0}^{t}\xi(\tau)d\tau}+\frac{1}{2}\int_{t_0}^{t}e^{\frac{1}{2}\int_{t}^{\tau}\xi(s)ds}\zeta(\tau)d\tau
\end{equation}

Let $\Delta T$ denote the time error between the actual sweeping time and the optimal sweeping time for multi-agent system. Then we can obtain the upper bound of $\Delta T$ in the following theorem.
\begin{theorem}
With the DSCA in Table \ref{dsca}, the time error $\Delta T$ for sweeping the irregular region $\Omega$ is bounded by
$$
\Delta T\leq \frac{1}{\sigma}\sqrt{\frac{n-1}{n}}\left(\sqrt{H(0)}e^{-\frac{\kappa\underline{\rho}\epsilon\lambda_{min}T_p}{2n}}+\frac{T_p}{2q}\sum_{i=1}^{q}e^{-\frac{\kappa\underline{\rho}\epsilon(q-i)\lambda_{min}T_p}{2nq}}\max_{\tau\in c_i}\zeta(\tau)\right)
$$
with $T_p=(l-\epsilon)/v$ and $c_i=[(i-1)T_p/q,iT_p/q]$, where $q\in Z^{+}$ is a positive integer and $\zeta(t)$ is given by equation (\ref{zeta}).
\end{theorem}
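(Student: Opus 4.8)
The plan is to reduce the time error to the partition energy $H$ evaluated at the instant $t=T_p$ when the region partition finishes, and then feed this quantity into the bound (\ref{dHt1}) that has already been established. First I would make $T^{*}$ and $T$ explicit. At $t=T_p$ one has $vt+\epsilon=l$, so $\Omega(T_p)=\Omega$, the total workload is $n\bar m(T_p)=\iint_{\Omega}\rho(x,y)\,dx\,dy$, and any admissible partition forces at least one agent to carry a workload no smaller than $\bar m(T_p)$; hence the optimal, perfectly balanced sweep time is $T^{*}=\bar m(T_p)/\sigma$. For the proposed scheme, note that once $t\ge T_p$ the partition bars stop moving in the $x$-direction, so each $\Omega_i$ is frozen and $m_i(t)=m_i(T_p)$; the residual workload $e_i(t)=m_i(T_p)-\sigma t$ then vanishes at $t=m_i(T_p)/\sigma$, and the DSCA terminates when the slowest agent is finished, i.e.\ at $T=\max_{i\in\mathcal I_n}m_i(T_p)/\sigma$. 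Therefore
$$
\Delta T=\frac{1}{\sigma}\Bigl(\max_{i\in\mathcal I_n}m_i(T_p)-\bar m(T_p)\Bigr)\le\frac{1}{\sigma}\max_{i\in\mathcal I_n}\bigl|m_i(T_p)-\bar m(T_p)\bigr|.
$$

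The next step is to invoke Lemma \ref{ineq3} at $t=T_p$, which turns the right-hand side into $\tfrac{1}{\sigma}\sqrt{\tfrac{n-1}{n}}\sqrt{H(T_p)}$, leaving only the task of bounding $\sqrt{H(T_p)}$. For this I would apply inequality (\ref{dHt1}) with $t_0=0$ and $t=T_p$. Since $\xi(t)\equiv\kappa\underline{\rho}\epsilon\lambda_{min}/n$ is constant by (\ref{xi}), the homogeneous factor is $e^{-\frac12\int_0^{T_p}\xi(\tau)\,d\tau}=e^{-\kappa\underline{\rho}\epsilon\lambda_{min}T_p/(2n)}$, which produces the first term of the claimed bound, and the kernel in the Duhamel-type integral reduces to $e^{\frac12\int_{T_p}^{\tau}\xi(s)\,ds}=e^{-\frac{\kappa\underline{\rho}\epsilon\lambda_{min}}{2n}(T_p-\tau)}$.

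It then remains to estimate $\frac12\int_0^{T_p}e^{-\frac{\kappa\underline{\rho}\epsilon\lambda_{min}}{2n}(T_p-\tau)}\zeta(\tau)\,d\tau$, and here I would chop $[0,T_p]$ into the $q$ equal subintervals $c_i=[(i-1)T_p/q,\,iT_p/q]$. On each $c_i$ one bounds $\zeta(\tau)\le\max_{\tau\in c_i}\zeta(\tau)$ and, since the exponential kernel is increasing in $\tau$, $e^{-\frac{\kappa\underline{\rho}\epsilon\lambda_{min}}{2n}(T_p-\tau)}\le e^{-\frac{\kappa\underline{\rho}\epsilon\lambda_{min}}{2n}\cdot\frac{(q-i)T_p}{q}}$; multiplying by the length $T_p/q$ of $c_i$ and summing over $i=1,\dots,q$ gives $\frac{T_p}{2q}\sum_{i=1}^{q}e^{-\kappa\underline{\rho}\epsilon(q-i)\lambda_{min}T_p/(2nq)}\max_{\tau\in c_i}\zeta(\tau)$. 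Assembling the three displays yields exactly the asserted upper bound on $\Delta T$. The only genuinely delicate point is the bookkeeping in the first paragraph — establishing that $m_i(t)$ is frozen for $t\ge T_p$, so that $T=\max_i m_i(T_p)/\sigma$ and $T^{*}=\bar m(T_p)/\sigma$ (implicitly under the natural nondegeneracy assumption that sweeping is not completed before the partition is); the remainder is a mechanical chain of Lemmas \ref{ineq}, \ref{ineq3} and \ref{ineq4} together with a Riemann-sum discretization of the convolution integral.
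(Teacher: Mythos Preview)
Your proposal is correct and follows essentially the same route as the paper's own proof: express $\Delta T$ via $\max_i|m_i(T_p)-\bar m(T_p)|$, apply Lemma~\ref{ineq3} to pass to $\sqrt{H(T_p)}$, insert the bound~(\ref{dHt1}) with constant $\xi$, and discretize the Duhamel integral over the $q$ equal subintervals using monotonicity of the kernel. Your first paragraph is actually more careful than the paper, which simply asserts $\Delta T=\frac{1}{\sigma}\max_i|m_i(T_p)-\bar m(T_p)|$ without the bookkeeping you supply.
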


\begin{proof}
Considering that the sweeping rate is the same for all agents, $\Delta T$ depends on the subregion with the most workload, so it can be computed as
\begin{equation}
\begin{split}
    \Delta T&=\frac{1}{\sigma}\max_{i\in I_{n}}|m_i(T_p)-\bar{m}(T_p)| \\
\end{split}
\end{equation}
From Lemma \ref{ineq3} and equation (\ref{dHt1}) with $t_0=0$, we have
\begin{equation*}
\begin{split}
    \Delta T&\leq\frac{1}{\sigma}\left(\frac{n-1}{n}\sum_{i=1}^{n+1}\left(m_i(T_p)-\bar{m}(T_p)\right)^2\right)^{\frac{1}{2}}\\
     &=\frac{1}{\sigma}\sqrt{\frac{n-1}{n}}\sqrt{H(T_p)} \\
     &\leq \frac{1}{\sigma}\sqrt{\frac{n-1}{n}}\left(\sqrt{H(0)}e^{-\frac{1}{2}\int_{0}^{T_p}\xi(\tau)d\tau}+\frac{1}{2}\int_{0}^{T_p}e^{\frac{1}{2}\int_{T_p}^{\tau}\xi(s)ds}\zeta(\tau)d\tau\right)\\
\end{split}
\end{equation*}
Divide the time interval $[0,T_p]$ into a series of consecutive subintervals $[t_{i-1},t_i]$ with $t_i=iT_p/q$, $i=1,2,...,q$ and $q\in Z^{+}$. Considering that
$$
e^{\frac{1}{2}\int_{T_p}^{\tau}\xi(s)ds}, \quad 0\leq\tau\leq T_p
$$
is an increasing function with respect to $\tau$, we have
\begin{equation}
\begin{split}
\int_{0}^{T_p}e^{\frac{1}{2}\int_{T_p}^{\tau}\xi(s)ds}\zeta(\tau)d\tau &=\sum_{i=1}^{q}\int_{t_{i-1}}^{t_i}e^{\frac{1}{2}\int_{T_p}^{\tau}\xi(s)ds}\zeta(\tau)d\tau\\
&\leq\sum_{i=1}^{q}e^{-\frac{1}{2}\int_{iT_p/q}^{T_p}\xi(s)ds}\int_{t_{i-1}}^{t_i}\zeta(\tau)d\tau\\
&=\sum_{i=1}^{q}e^{-\frac{\kappa\underline{\rho}\epsilon(q-i)\lambda_{min}T_p}{2nq}}\int_{(i-1)T_p/q}^{iT_p/q}\zeta(\tau)d\tau\\
\end{split}
\end{equation}
Moreover, it follows from equation (\ref{xi}) that
$$
\int_{0}^{T_p}\xi(\tau)d\tau=\int_{0}^{T_p}\frac{\kappa\underline{\rho}\epsilon\lambda_{min}}{n}d\tau=\frac{\kappa\underline{\rho}\epsilon\lambda_{min}T_p}{n}
$$
which leads to
\begin{equation}\label{updTb}
\begin{split}
\Delta T&\leq \frac{1}{\sigma}\sqrt{\frac{n-1}{n}}\left(\sqrt{H(0)}e^{-\frac{\kappa\underline{\rho}\epsilon\lambda_{min}T_p}{2n}}+\frac{1}{2}\sum_{i=1}^{q}e^{-\frac{\kappa\underline{\rho}\epsilon(q-i)\lambda_{min}T_p}{2nq}}\int_{(i-1)T_p/q}^{iT_p/q}\zeta(\tau)d\tau\right)\\
&\leq\frac{1}{\sigma}\sqrt{\frac{n-1}{n}}\left(\sqrt{H(0)}e^{-\frac{\kappa\underline{\rho}\epsilon\lambda_{min}T_p}{2n}}+\frac{T_p}{2q}\sum_{i=1}^{q}e^{-\frac{\kappa\underline{\rho}\epsilon(q-i)\lambda_{min}T_p}{2nq}}\max_{\tau\in c_i}\zeta(\tau)\right)\\
\end{split}
\end{equation}
The proof is thus completed.
\end{proof}

\begin{remark}
Compared with previous work \cite{zhai12,zhai13}, the distributed sweep coverage algorithm in Table \ref{dsca} is able to complete the sweeping task without synchronizing joint actions of moving from the current stripe to the
next stripe. Moreover, the inequality (\ref{updTb}) provides more precise estimation on the upper bound of $\Delta T$ when the partition speed of multi-agent systems along the $y$-axis $v$ is relatively small and the integer $q$ in (\ref{updTb}) is relatively large.
\end{remark}

\begin{theorem}
Collision avoidance of partition bars is guaranteed if the following inequality
\begin{equation}\label{collision}
    \Delta x_i(0)>\kappa T_p \sqrt{\lambda_{max}}\left(2\sqrt{H(0)}+T_p\max_{0\leq\tau\leq T_p}\zeta(\tau)\right),\quad \forall i\in\mathcal{I}_n
\end{equation}
holds with $T_p=(l-\epsilon)/v$, where $\zeta(t)$ is given by equation (\ref{zeta}).
\end{theorem}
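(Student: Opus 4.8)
The plan is to show that the strict ordering $x_0(t)<x_1(t)<\cdots<x_n(t)$, i.e.\ $\Delta x_i(t):=x_i(t)-x_{i-1}(t)>0$ for every $i\in\mathcal{I}_n$, is preserved throughout the partition phase $t\in[0,T_p]$; since the control inputs are applied only for $t\le T_p$ and the bars (hence the subregions) are frozen afterwards, collision avoidance on $[0,T_p]$ is enough. By the triangle inequality
$$\Delta x_i(t)\ge\Delta x_i(0)-|x_i(t)-x_i(0)|-|x_{i-1}(t)-x_{i-1}(0)|,$$
so it suffices to bound the displacement of each partition bar, uniformly on $[0,T_p]$, by one half of the right-hand side of (\ref{collision}).

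For an interior bar $i\in\mathcal{I}_{n-1}$, integrating (\ref{sys}) gives $|x_i(t)-x_i(0)|\le\int_{0}^{t}|u_i(\tau)|\,d\tau$, and since $|u_i(\tau)|\le\|u(\tau)\|_2=\kappa\big(\sum_{j=1}^{n-1}(m_{j+1}(\tau)-m_j(\tau))^2\big)^{1/2}\le\kappa\sqrt{\lambda_{max}}\sqrt{H(\tau)}$ by Lemma \ref{ineq}, we obtain $|x_i(t)-x_i(0)|\le\kappa\sqrt{\lambda_{max}}\int_{0}^{T_p}\sqrt{H(\tau)}\,d\tau$ for all $t\in[0,T_p]$. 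To bound the last integral I would convert the pointwise estimate (\ref{dHt1}), taken with $t_0=0$, into a uniform one: because $\xi(\cdot)\ge0$, both exponential factors in (\ref{dHt1}) are at most $1$, so $\sqrt{H(\tau)}\le\sqrt{H(0)}+\frac{1}{2}\int_{0}^{\tau}\zeta(s)\,ds\le\sqrt{H(0)}+\frac{T_p}{2}\max_{0\le\tau\le T_p}\zeta(\tau)$ on $[0,T_p]$. Integrating once more over $[0,T_p]$ yields
$$|x_i(t)-x_i(0)|\le\frac{\kappa T_p\sqrt{\lambda_{max}}}{2}\Big(2\sqrt{H(0)}+T_p\max_{0\le\tau\le T_p}\zeta(\tau)\Big),\qquad t\in[0,T_p],$$
and inserting this into the triangle-inequality bound gives $\Delta x_i(t)>\Delta x_i(0)-\kappa T_p\sqrt{\lambda_{max}}\big(2\sqrt{H(0)}+T_p\max_{0\le\tau\le T_p}\zeta(\tau)\big)\ge0$ under hypothesis (\ref{collision}).

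The step I expect to require the most care is the pair of bars adjacent to the lateral boundaries ($i=1$ and $i=n$), whose motion is governed by $\dot{x}_0=vg_a'(vt+\epsilon)$ and $\dot{x}_n=vg_b'(vt+\epsilon)$ rather than by the control law (\ref{control}); the argument above must be supplemented by an estimate showing that these boundary-following trajectories move by no more than the same amount over $[0,T_p]$ (absent a quantitative bound on $g_a,g_b$, the statement should be read as concerning the interior bars). A minor point to verify is the reduction to $[0,T_p]$ itself: once the partition is complete the inputs $u_i$ vanish and the subregion boundaries no longer move, so no collision can occur for $t>T_p$.
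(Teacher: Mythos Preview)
Your argument is correct and is essentially the paper's own proof, with only a cosmetic difference in bookkeeping: the paper writes $\Delta x_i(t)=\Delta x_i(0)+\kappa\mathcal{F}_i(t)$ with $\mathcal{F}_i(t)=\int_0^t\big((m_{i+1}-m_i)-(m_i-m_{i-1})\big)d\tau$ and bounds $\mathcal{F}_i(t)\ge -2t\max_{0\le\tau\le t}|m_{i+1}(\tau)-m_i(\tau)|$, whereas you bound each displacement $|x_i(t)-x_i(0)|$ separately via the triangle inequality; both routes feed the same estimate $|m_{i+1}-m_i|\le\sqrt{\lambda_{max}H}$ from Lemma~\ref{ineq} together with the crude bound $\sqrt{H(\tau)}\le\sqrt{H(0)}+\tfrac{1}{2}\int_0^\tau\zeta$ from (\ref{dHt1}), and land on the identical inequality $\Delta x_i(t)\ge\Delta x_i(0)-\kappa T_p\sqrt{\lambda_{max}}\big(2\sqrt{H(0)}+T_p\max_{[0,T_p]}\zeta\big)$.

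Your caveat about the end bars $x_0$ and $x_n$ is well taken: the paper's proof in fact writes the control-law integral formula for both $x_i$ and $x_{i-1}$ without distinguishing the boundary cases $i=1$ and $i=n$, so it implicitly treats only the interior gaps and leaves the same loose end you flagged.
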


\begin{proof}
Let $\Delta x_i(t)=x_i(t)-x_{i-1}(t),~i\in \mathcal{I}_n$ denote the distance between two adjacent partition bars. Then we have
$$
x_i(t)=x_i(0)+\int_{0}^{t}\dot{x}_i(\tau)d\tau=x_i(0)+\kappa\int_{0}^{t}\left(m_{i+1}(\tau)-m_{i}(\tau)\right)d\tau
$$
and
$$
x_{i-1}(t)=x_{i-1}(0)+\int_{0}^{t}\dot{x}_{i-1}(\tau)d\tau=x_{i-1}(0)+\kappa\int_{0}^{t}\left(m_{i}(\tau)-m_{i-1}(\tau)\right)d\tau
$$
The difference between the above two equations leads to
\begin{equation}\label{deltX}
\begin{split}
\Delta x_i(t)&=x_i(t)-x_{i-1}(t)\\
&=x_i(0)-x_{i-1}(0)+\kappa\int_{0}^{t}\left(m_{i+1}(\tau)+m_{i-1}(\tau)-2m_{i}(\tau)\right)d\tau \\
&=\Delta x_i(0)+\kappa\int_{0}^{t}\left(m_{i+1}(\tau)+m_{i-1}(\tau)-2m_{i}(\tau)\right)d\tau
\end{split}
\end{equation}
To simplify the expression, we define
$$
\mathcal{F}_i(t)=\int_{0}^{t}\left(m_{i+1}(\tau)+m_{i-1}(\tau)-2m_{i}(\tau)\right)d\tau
$$
Thus, equation (\ref{deltX}) can be rewritten as
\begin{equation}\label{equalX}
\Delta x_i(t)=\Delta x_i(0)+\kappa\mathcal{F}_i(t)
\end{equation}
and we can obtain
\begin{equation*}
\begin{split}
\mathcal{F}_i(t)&=\int_{0}^{t}\left(m_{i+1}(\tau)+m_{i-1}(\tau)-2m_{i}(\tau)\right)d\tau \\
&=\int_{0}^{t}\left(m_{i+1}(\tau)-m_{i}(\tau)\right)d\tau -\int_{0}^{t}\left(m_{i}(\tau)-m_{i-1}(\tau)\right)d\tau \\
&\geq -\int_{0}^{t}\left|m_{i+1}(\tau)-m_{i}(\tau)\right|d\tau -\int_{0}^{t}\left|m_{i}(\tau)-m_{i-1}(\tau)\right|d\tau \\
&\geq -2t\max_{0\leq\tau\leq t}|m_{i+1}(\tau)-m_{i}(\tau)|\\
\end{split}
\end{equation*}
It follows from Lemma \ref{ineq} and inequality (\ref{dHt1}) that
$$
\max_{0\leq\tau\leq t}|m_{i+1}(\tau)-m_{i}(\tau)|\leq \max_{0\leq\tau\leq t} \left(\sum_{i=1}^{n-1}\left(m_{i+1}(\tau)-m_{i}(\tau)\right)^2\right)^{\frac{1}{2}}\leq\sqrt{\lambda_{max}}\max_{0\leq\tau\leq t}\sqrt{H(\tau)}
$$
and
$$
\max_{0\leq\tau\leq t}\sqrt{H(\tau)}\leq\sqrt{H(0)}+\frac{1}{2}\int_{0}^{t}\zeta(\tau)d\tau
$$
Therefore, we have
\begin{equation}\label{F_ineq}
\begin{split}
\mathcal{F}_i(t)&\geq -2t\max_{0\leq\tau\leq t}|m_{i+1}(\tau)-m_{i}(\tau)|\\
&\geq -2t\sqrt{\lambda_{max}}\left(\sqrt{H(0)}+\frac{1}{2}\int_{0}^{t}\zeta(\tau)d\tau\right)\\
&\geq -T_p\sqrt{\lambda_{max}}\left(2\sqrt{H(0)}+\int_{0}^{T_p}\zeta(\tau)d\tau\right),\quad i\in\mathcal{I}_n
\end{split}
\end{equation}
Substituting inequality (\ref{F_ineq}) into equation (\ref{equalX}) yields
\begin{equation*}
\begin{split}
\Delta x_i(t)&=\Delta x_i(0)+\kappa\mathcal{F}_i(t)\\
&\geq \Delta x_i(0)-\kappa T_p\sqrt{\lambda_{max}}\left(2\sqrt{H(0)}+\int_{0}^{T_p}\zeta(\tau)d\tau\right)\\
&\geq \Delta x_i(0)-\kappa T_p\sqrt{\lambda_{max}}\left(2\sqrt{H(0)}+T_p\max_{0\leq\tau\leq T_p}\zeta(\tau)\right)\\
\end{split}
\end{equation*}
To avoid the collision of partition bars, we need to ensure $\Delta x_i(t)>0,\forall t\geq0,~i\in\mathcal{I}_n$, and it can be achieved by the following inequality
$$
\Delta x_i(0)-\kappa T_p\sqrt{\lambda_{max}}\left(2\sqrt{H(0)}+T_p\max_{0\leq\tau\leq T_p}\zeta(\tau)\right)>0,
$$
which is equivalent to
$$
\Delta x_i(0)>\kappa T_p \sqrt{\lambda_{max}}\left(2\sqrt{H(0)}+T_p\max_{0\leq\tau\leq T_p}\zeta(\tau)\right) \quad \forall i\in\mathcal{I}_n
$$
This completes the proof.
\end{proof}

For rectangular region, we can obtain the following corollary on the upper bound of $\Delta T$.
\begin{coro}
Multi-agent systems adopt the DSCA in Table \ref{dsca} to sweep the rectangular region with the width $l_a$ and the length $l_b$. Then the time error $\Delta T$ is bounded by
$$
\Delta T\leq \frac{1}{\sigma}\sqrt{\frac{(n-1)H(0)}{n}}e^{-\frac{\kappa\underline{\rho}\epsilon\lambda_{min}T_p}{2n}}+\frac{\bar{\rho}(n-1)l_aT_p}{2qn\sigma}\sum_{i=1}^{q}e^{-\frac{\kappa\underline{\rho}\epsilon(q-i)\lambda_{min}T_p}{2nq}}
$$
with $T_p=(l_b-\epsilon)/v$ and $q\in Z^{+}$.
\end{coro}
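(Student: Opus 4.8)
The plan is to obtain the corollary as a direct specialization of the preceding theorem (the $\Delta T$ bound for the irregular region $\Omega$) to the case in which $\Omega$ is the rectangle of width $l_a$ and length $l_b$. First I would place the coordinate frame so that the two smooth side boundaries are the vertical lines $x_0=g_a(y)\equiv 0$ and $x_n=g_b(y)\equiv l_a$, while the two parallel boundaries are $y=0$ and $y=l_b$; thus $l=l_b$ and $T_p=(l_b-\epsilon)/v$. Since constant functions are trivially smooth and Assumption~2.1 on $\rho$ is still assumed, the rectangle is a legitimate instance of the model of Section~\ref{sec:prob}, so the previous theorem applies verbatim.

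Next I would evaluate $\zeta(t)$ from equation~(\ref{zeta}) in this setting. Because $g_a$ and $g_b$ are constant, $g'_a(vt+\epsilon)=g'_b(vt+\epsilon)=0$ and $g_b(vt+\epsilon)-g_a(vt+\epsilon)=l_a$ for all $t\in[0,T_p]$, whence
$$
\zeta(t)=\bar{\rho}\,l_a\sqrt{\tfrac{n-1}{n}},
$$
a time-independent constant. In particular $\max_{\tau\in c_i}\zeta(\tau)=\bar{\rho}\,l_a\sqrt{(n-1)/n}$ for every subinterval $c_i=[(i-1)T_p/q,\,iT_p/q]$.

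Then I would substitute this constant into the bound from the preceding theorem,
$$
\Delta T\leq \frac{1}{\sigma}\sqrt{\frac{n-1}{n}}\left(\sqrt{H(0)}\,e^{-\frac{\kappa\underline{\rho}\epsilon\lambda_{min}T_p}{2n}}+\frac{T_p}{2q}\sum_{i=1}^{q}e^{-\frac{\kappa\underline{\rho}\epsilon(q-i)\lambda_{min}T_p}{2nq}}\max_{\tau\in c_i}\zeta(\tau)\right),
$$
pull the constant $\bar{\rho}\,l_a\sqrt{(n-1)/n}$ out of the sum, and collect factors: the first term becomes $\frac{1}{\sigma}\sqrt{(n-1)H(0)/n}\,e^{-\kappa\underline{\rho}\epsilon\lambda_{min}T_p/(2n)}$, and in the second term the two occurrences of $\sqrt{(n-1)/n}$ combine to $(n-1)/n$, yielding $\frac{\bar{\rho}(n-1)l_aT_p}{2qn\sigma}\sum_{i=1}^{q}e^{-\kappa\underline{\rho}\epsilon(q-i)\lambda_{min}T_p/(2nq)}$. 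This is precisely the asserted inequality with $T_p=(l_b-\epsilon)/v$.

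I do not expect a genuine obstacle here: the argument is essentially a bookkeeping computation once $\zeta$ is shown to be constant. The only point deserving a word of care is the verification, noted above, that the rectangular region satisfies all hypotheses of the irregular-region theorem (smoothness of $g_a,g_b$ and the standing bounds on $\rho$), so that the general bound may be invoked without modification.
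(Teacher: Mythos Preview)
Your proposal is correct and follows essentially the same approach as the paper: specialize the irregular-region bound by noting that for the rectangle $g'_a=g'_b=0$ and $g_b-g_a=l_a$, so $\zeta(t)=\bar{\rho}\,l_a\sqrt{(n-1)/n}$ is constant, and then substitute into inequality~(\ref{updTb}) and simplify. The paper's proof is in fact slightly terser than yours, omitting the explicit check that the rectangle satisfies the hypotheses of the general theorem.
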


\begin{proof}
For the rectangular region, we have $g^{'}_a(vt)=g^{'}_b(vt)=0$ and $g_b(y)-g_a(y)=l_a$ , which leads to
$$
\zeta(t)=\left(g_b(vt)-g_a(vt)\right)\bar{\rho}\sqrt{\frac{n-1}{n}}=l_a\bar{\rho}\sqrt{\frac{n-1}{n}}.
$$
Therefore, inequality (\ref{updTb}) can be further simplified as
\begin{equation}
\begin{split}
    \Delta T&\leq \frac{1}{\sigma}\sqrt{\frac{n-1}{n}}\left(\sqrt{H(0)}e^{-\frac{\kappa\underline{\rho}\epsilon\lambda_{min}T_p}{2n}}+\frac{T_p}{2q}\sum_{i=1}^{q}e^{-\frac{\kappa\underline{\rho}\epsilon(q-i)\lambda_{min}T_p}{2nq}}l_a\bar{\rho}\sqrt{\frac{n-1}{n}}\right)\\
    &=\frac{1}{\sigma}\sqrt{\frac{(n-1)H(0)}{n}}e^{-\frac{\kappa\underline{\rho}\epsilon\lambda_{min}T_p}{2n}}+\frac{\bar{\rho}(n-1)l_aT_p}{2qn\sigma}\sum_{i=1}^{q}e^{-\frac{\kappa\underline{\rho}\epsilon(q-i)\lambda_{min}T_p}{2nq}}
\end{split}
\end{equation}
which completes the proof.
\end{proof}

\begin{remark}
For the rectangular region with the width $l_a$ and the length $l_b$, the upper bound of $\Delta T$ approximates
$\frac{l_al_b\bar{\rho}}{2v\sigma}$ when the length of partition bars $\epsilon$ is sufficient small and the number
of agents $n$ is sufficient large.
\end{remark}

\begin{remark}
During the sweeping process, it is assumed that the inequality $\sigma t\leq m_i(t),~\forall t\geq 0, i\in \mathcal{I}_n$ holds, which implies that the sweeping operation lags that of region partition, and the agents are unable to complete the workload in their respective subregion before the region partition comes to an end.
\end{remark}

\section{Numerical Simulations}\label{sec:simu}
This section provides the numerical example to validate our proposed sweep coverage algorithm. Specifically, $5$ mobile agents are instructed to cooperatively sweep the region $\Omega$, which is enclosed by two curves:
$$
x_0=g_a(y)=0.2\sin\frac{\pi(y-4)}{3}+1, \quad x_5=g_b(y)=0.2\sin\frac{\pi(y-4)}{3}+6
$$
and two line segments: $y=0$ and $y=10$. In addition, the distribution density function of workload is described by
$$
\rho(x,y)=\frac{3}{2}+\frac{1}{2}\sin\frac{x^2+y^2}{5}
$$
with the upper bound $\bar{\rho}=2$ and the lower bound $\underline{\rho}=1$. Other parameters are given as follows: $\kappa=1$, $\epsilon=0.01$, $v=8$, $\sigma=6$ and $q=10$. For simplicity, Euler method is employed to implement the partition dynamics (\ref{sys}) with the step size $0.001$. Figure~\ref{sweep} presents the cooperative sweep process of $5$ mobile agents in the irregular region, where the color bar indicates the workload density ranging from light yellow to dark red. At the initial time $t=0$, all the virtual partition bars are located at the bottom of the region. Then multi-agent system starts partitioning the whole region and meanwhile the agents are sweeping their own subregions. At $t=0.5$, part of the region has been partitioned by virtual pars, and each agent completes the same workload $\sigma t=3$ in its own subregion. And the swept parts are marked in white. The task of region partition is finished at $t=1.25$ when all the partition bars arrive at the top of coverage region. Next, the mobile agents continue to sweep their respective subregions. Finally, the sweeping process comes to an end at $t=2.72$ after cleaning the workload in the whole region. Since the optimal sweeping time $T^*$ is equal to $2.54$, we get the time error $\Delta T=0.18$. According to the inequality (\ref{updTb}), the upper bound of $\Delta T$ is $0.88$, which is tighter than the result given by Theorem $4.1$ in \cite{zhai13}.
\begin{figure}\centering
 {\centering\includegraphics[width=0.45\textwidth]{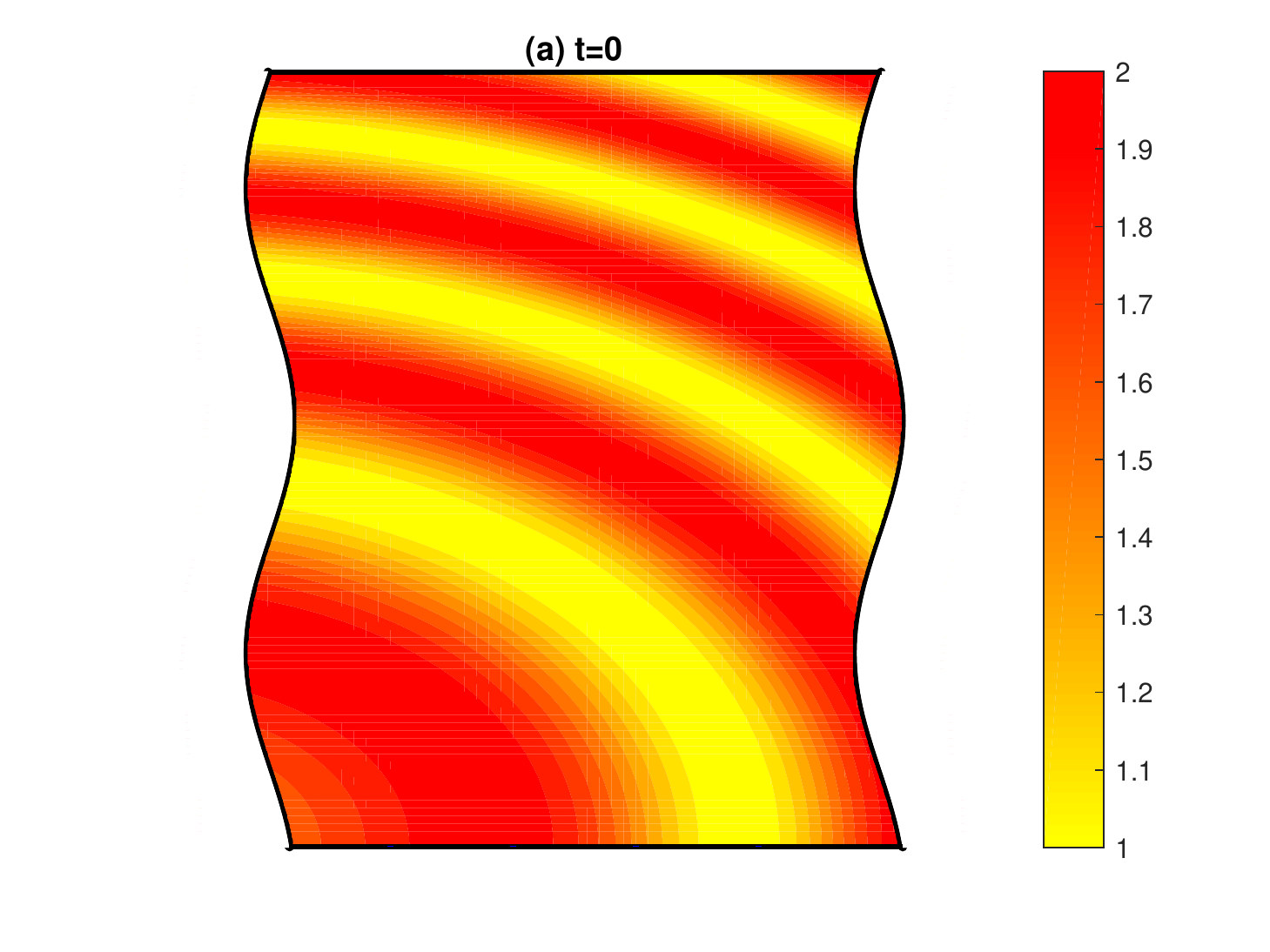}}
 {\centering\includegraphics[width=0.45\textwidth]{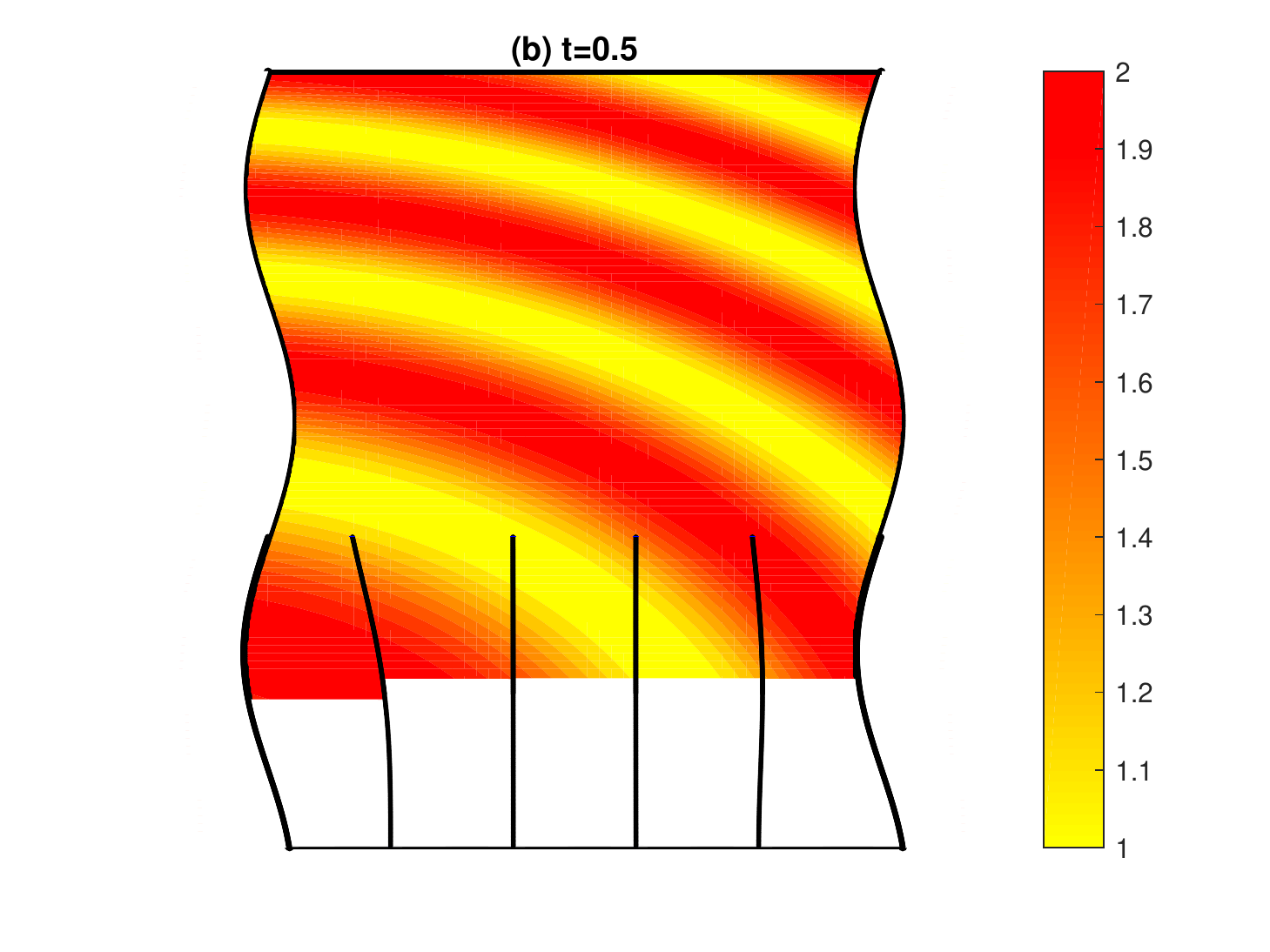}} \\
 {\centering\includegraphics[width=0.45\textwidth]{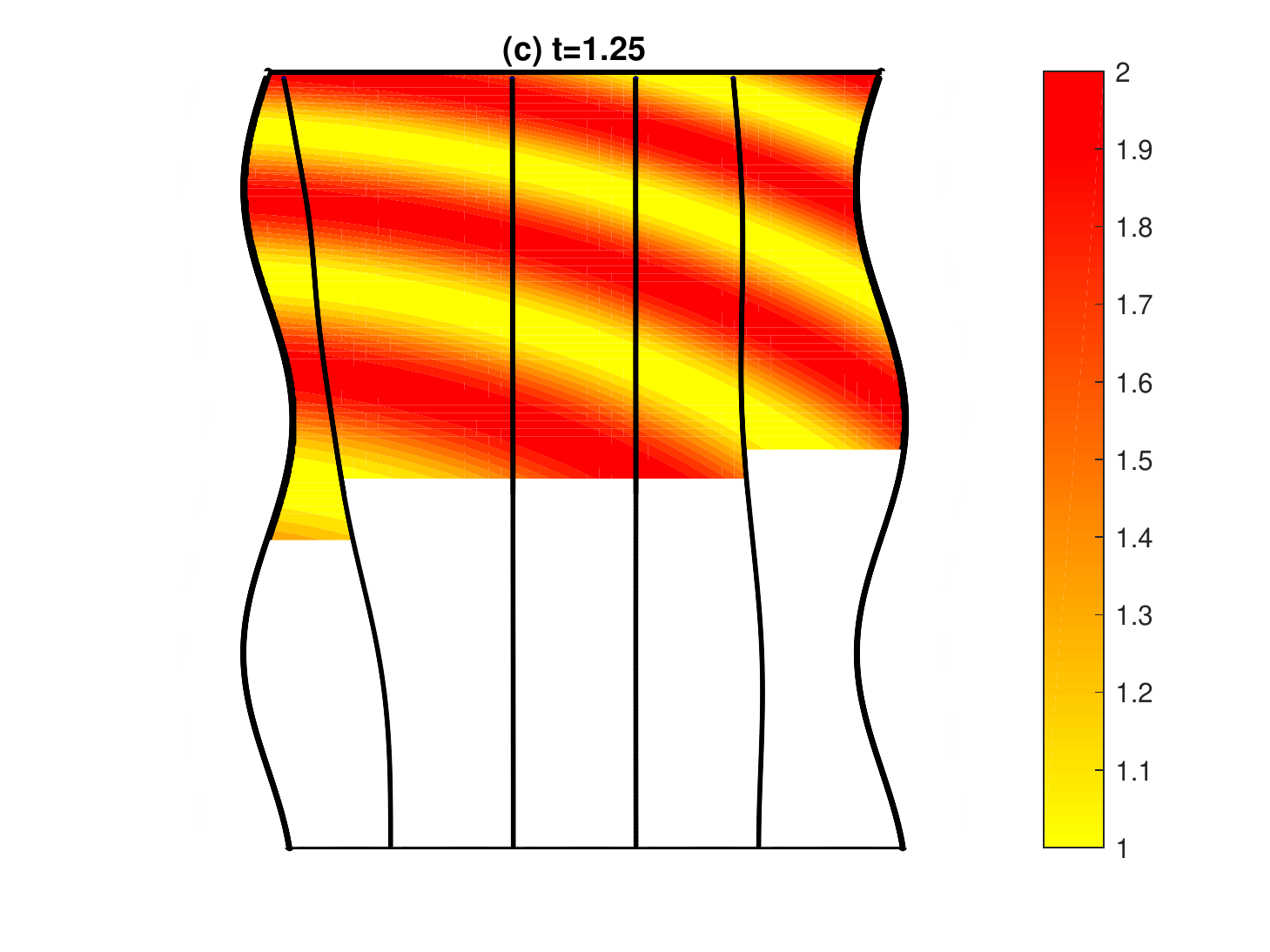}}
 {\centering\includegraphics[width=0.45\textwidth]{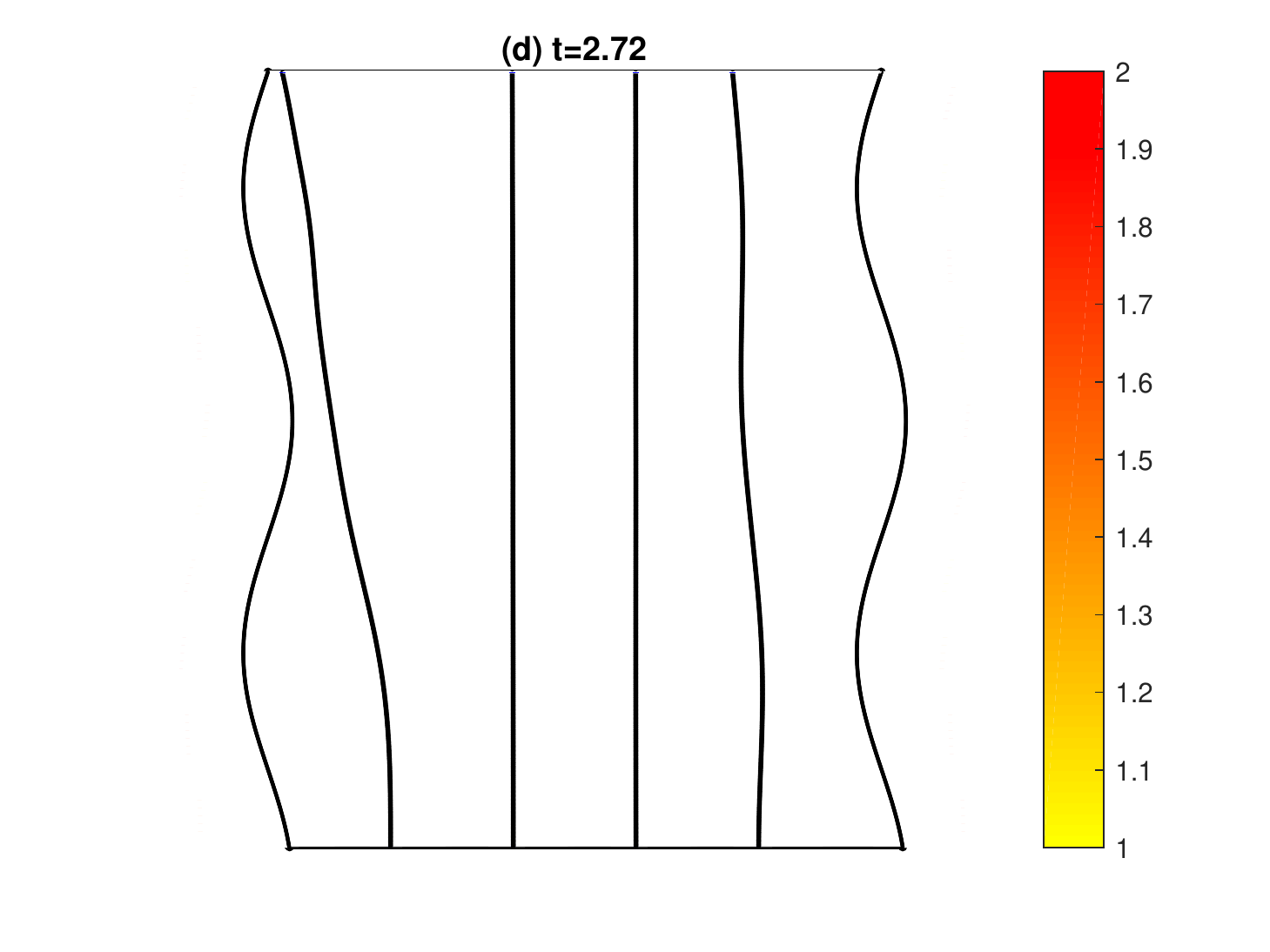}} \\
 \caption{\label{sweep} Sweeping coverage of $5$ mobile agents.}
\end{figure}

\section{Conclusions}\label{sec:con}

In this paper, we developed a novel formulation for the sweep coverage of multi-agent systems in uncertain environment. It has been proved that multi-agent system with the proposed control algorithm is input-to-state stable. Moreover, we obtained the upper bound for the error between the actual sweep time and the optimal time. Numerical simulations demonstrated the effectiveness and advantages of the proposed approach. Future work might include the sweep coverage of uncertain region with convex obstacles and the kinematics of multi-agent system with nonholonomic constraints.


\end{document}